\newcommand{\R}{{\mathbb R}}
\newtheorem{thm}{Theorem}[section]
\newtheorem{cor}[thm]{Corollary}
\newtheorem{lem}[thm]{Lemma}
\theoremstyle{definition}
\begin{document}
\title[Cohomology groups invariant under continuous orbit equivalence]{Cohomology groups invariant under continuous orbit equivalence}
\author{Yongle Jiang}
\address{Institute of Mathematics, Polish Academy of Sciences, Warsaw, 00-656, Poland.}
\email{yjiang@impan.pl}
\date{\today}
\keywords{Continuous orbit equivalence; uniformly finite homology; bounded cohomology}
\maketitle

\begin{abstract}
By the work of Brodzki-Niblo-Nowak-Wright and Monod, topological amenability of a continuous group action can be characterized using uniformly finite homology groups or bounded cohomology groups associated to this action. We show that (certain variations of) these groups are invariants for topologically free actions under continuous orbit equivalence.  
\end{abstract}

\section{Introduction}

We continue our study of continuous orbit equivalence introduced by Li \cite{li_0}. In this paper, we focus on certain (co)homology groups for a continuous group action. Let us first review the introduction of these (co)homology groups.

Let $G$ be a countable discrete group. There are two remarkable characterizations of amenablility of $G$. One, given by Johnson-Ringrose \cite{johnson}, says that $G$ is amenable if and only if the first bounded cohomology group with coefficients in $\ell^1_0(G)^{**}$ vanishes, i.e. $H_b^1(G, \ell^1_0(G)^{**})=0$, where $\ell^1_0(G)$ denotes the augmentation ideal, i.e. kernel of the summation map from $\ell^1(G)$ to $\mathbb{R}$. By contrast, Block and Weinberger \cite{bw} described amenability in terms of non-vanishing of the 0-dimensional uniformly finite homology of $G$, i.e. $H_0^{uf}(G, \mathbb{R})\neq 0$. For a short, unified proof of these two results, see \cite{bnw}. In particular, it was observed there that $H_*^{uf}(G, \mathbb{R})=H_*(G, \ell^{\infty}(G))$ if $G$ is finitely generated.

The notion of amenable actions of groups acting on topological spaces generalizes the concept of amenability and appears in many areas of mathematics. For example, a group acts amenably on a point if and only if it is amenable, while every hyperbolic group acts amenably on its Gromov boundary. For more on amenable actions, see \cite{AR, gk, hr, oz}.

Parallel with these characterizations, people also found two similar characterizations for the amenablity of group actions in the topological sense. To do this, the key step is to find appropriate coefficient modules associated to a continuous action $G\curvearrowright X$.

More precisely, Brodzki, Niblo, Nowak and Wright considered the standard module $W_0(G, X)$ and its submodule $N_0(G, X):=C(X, \ell^1_0(G))$. Note that $W_0(G, X)^*$ and $N_0(G, X)^{**}$ reduce to $\ell^{\infty}(G)$ and $\ell^1_0(G)^{**}$ respectively when $X$ is a point. Naturally, they considered the bounded cohomology groups with coefficients in $N_0(G, X)^{**}$ \cite{bnnw2} and also the uniformly finite homology of an action, $H^{uf}_n(G\curvearrowright X)$ as the group homology with coefficients in $W_0(G, X)^*$, i.e. $H^{uf}_n(G\curvearrowright X):=H_n(G, W_0(G, X)^*)$ \cite{bnnw}. They succeeded in characterizing amenability of actions using these (co)homology groups, generalizing the above results of Johson-Ringrose and Block-Weinberger for group case. A similar approach was also taken by Monod in \cite{monod}.

In \cite{bw} (see also \cite{li}), among other results, $H_*^{uf}(G, \mathbb{R})$ is shown to be an invariant for groups under coarse equivalence, i.e. quasi-isometry if the groups are finitely generated. Hence it is natural to ask whether the above (co)homology groups for actions are also invariants under some ``coarse equivalence" for actions. 

In this paper, we show this is indeed possible if we take ``coarse equivalence" to be ``continuous orbit equivalence", and actions are assumed to be topologically free.

Let us recall the definition of continuous orbit equivalence, for known results on this notion and its connection to geometric group theory, see \cite{cohen, cj1, cj2,j,li_0, li}.

Let $G\curvearrowright X$ and $H\curvearrowright Y$ be two actions by homeomorphisms, where $G$, $H$ are countable discrete groups and $X$, $Y$ are compact Hausdorff spaces. Following \cite{li_0}, we say the two actions are \emph{continuous orbit equivalent} (abbreviated as COE) if there are homeomorphisms $\phi: X\approx Y$, $\psi: Y\approx X$ and continuous maps $c: G\times X\to H$, $c': H\times Y\to G$ such that $\phi(gx)=c(g, x)\phi(x)$ and $\psi(hy)=c'(h, y)\psi(y)$ hold for all $g\in G$, $h\in H$, $x\in X$ and $y\in Y$. If these two actions are \emph{topologically free}, i.e. points with trivial stabilizers are dense, then both $c$ and $c'$ are cocycles \cite[Lemma 2.8]{li_0}. Recall that $c: G\times X\to H$ is a \emph{cocycle} if $c(g_1g_2, x)=c(g_1, g_2x)c(g_2, x)$ for all $g_1, g_2\in G$ and all $x\in X$.

Now, we can state our main theorems. Note that all acting groups are assumed to be countable discrete and spaces are assumed to be compact Hausdorff.

\begin{thm}\label{thm on uf homology}
Let $G\curvearrowright X$ and $H\curvearrowright Y$ be topologically free actions. If these two actions are COE, then $H^{uf}_0(G\curvearrowright X)\cong H^{uf}_0(H\curvearrowright Y)$ and $H_n(G, N_0(G, X)^*)\cong H_n(H, N_0(H, Y)^*)$ for all $n\geq 0$.
\end{thm}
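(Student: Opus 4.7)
The plan exploits the standard fact that a continuous orbit equivalence of topologically free actions induces an isomorphism of transformation groupoids $\alpha:G\ltimes X\to H\ltimes Y$, explicitly given by $(g,x)\mapsto(c(g,x),\phi(x))$ with inverse $(h,y)\mapsto(c'(h,y),\psi(y))$, and aims to show that the (co)homology groups in the statement depend only on this groupoid.

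I would first construct an explicit transfer map for the coefficient modules: define $\Phi:N_0(G,X)\to N_0(H,Y)$ by
\[
\Phi(f)(y)(h):=f(\psi(y))(c'(h,y)),
\]
and analogously for $W_0$. A key observation is that on the dense set of $y\in Y$ whose $H$-stabilizer is trivial and whose image $\psi(y)$ has trivial $G$-stabilizer, the map $h\mapsto c'(h,y)$ is a bijection $H\to G$. This ensures preservation of the augmentation, $\sum_h\Phi(f)(y)(h)=\sum_g f(\psi(y))(g)=0$, which then extends to all of $Y$ by continuity. Continuity of $\Phi(f)$ as a $C(Y,\ell^1_0(H))$-valued function follows from the local constancy of $c'$ in its second variable (since $H$ is discrete and $c'$ is continuous) combined with uniform continuity of $f$ on the compact space $X$. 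The reverse COE data provides an inverse.

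The main technical step is to upgrade these module isomorphisms to isomorphisms on group homology, despite the fact that the acting groups differ. For the $H_0$-statement, this is essentially direct: both $H_0^{uf}(G\curvearrowright X)=(W_0(G,X)^*)_G$ and $H_0(G,N_0(G,X)^*)$ are coinvariants, and for a ``point-mass'' coefficient $\mu$ concentrated at $x\in X$, the action of $g\in G$ on $\mu$ matches, under the transfer map, the action of $c(g,x)\in H$ on the image, so coinvariant relations correspond. For $n\geq 1$, I would construct a chain map between the normalized bar complexes computing the two sides, sending a tuple $\mu\otimes(g_1,\ldots,g_n)$ to $(\Phi^{-1})^*\mu\otimes(h_1(\cdot),\ldots,h_n(\cdot))$ where each $h_i$ is the cocycle value depending on a base point in $X$, with the resulting $X$-dependence absorbed into the coefficient via duality.

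The main obstacle will be verifying the chain-map property, which reduces to careful manipulation of the cocycle identities $c(g_1g_2,x)=c(g_1,g_2x)c(g_2,x)$ to match the bar differentials on both sides, and checking that the symmetric construction using $\phi,c$ provides a homotopy inverse. A more conceptual alternative would be to interpret both homology groups as groupoid homology of the common groupoid $G\ltimes X\cong H\ltimes Y$ with coefficients in a module manifestly intrinsic to the groupoid (essentially, continuous sections of an $\ell^1_0$-type bundle over source fibers), making the isomorphism tautological once this identification is established.
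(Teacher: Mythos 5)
Your overall plan---build an explicit isometry between the coefficient modules from the orbit cocycles and then construct bijective chain maps by decomposing over clopen subsets where the cocycle takes a constant value---is essentially the strategy the paper pursues (the groupoid-homology alternative you mention at the end is also raised, and deferred, in Remark (3) of the paper). However, there are two concrete gaps.

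First, your transfer formula is the wrong twist of the cocycle. You propose $\Phi(f)(y)(h):=f(\psi(y))(c'(h,y))$, the literal pushforward along the groupoid isomorphism $(h,y)\mapsto(c'(h,y),\psi(y))$. This is indeed a well-defined isometry $N_0(G,X)\to N_0(H,Y)$, but it does not satisfy the local equivariance that makes the bar-differential computation close up. The paper instead uses $L(\eta)_y(h):=\eta_{\psi(y)}\bigl(c'(h,h^{-1}y)\bigr)=\eta_{\psi(y)}\bigl(c'(h^{-1},y)^{-1}\bigr)$ (and dually $\pi(\xi)_x(g)=\xi_{\phi(x)}(c(g,g^{-1}x))$). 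The distinction matters: to prove the analogue of Lemma~\ref{lem: module structure under restriction}, one needs, on the clopen set where $c'(h_0^{-1},y)=g^{-1}$, the identity
\[
c'(h_0^{-1}h,\,h^{-1}y)=c'(h_0^{-1},y)\,c'(h,h^{-1}y),
\]
which the cocycle relation delivers exactly because the base point is $h^{-1}y$. With your $\Phi$, the relevant expansion is $c'(h_0^{-1}h,h_0^{-1}y)=c'(h_0^{-1},hh_0^{-1}y)\,c'(h,h_0^{-1}y)$, in which the factor $c'(h_0^{-1},hh_0^{-1}y)$ is evaluated at a point other than $y$ and does not equal $g^{-1}$; the computation does not peel apart. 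So with your map, the chain-map identity $\partial_n S_n=S_{n-1}\partial_n$ would fail as stated, and the ``$X$-dependence absorbed via duality'' step is precisely where this bites. Also, your justification of continuity of $\Phi(f)$ is too quick: local constancy of $c'$ plus uniform continuity of $f$ are not enough by themselves; one must also control the $\ell^1$-tail uniformly, which is the actual content of the proof of Lemma~\ref{lemma: key lemma}.

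Second, the $H_0^{uf}$ argument. You propose to work directly with $W_0(G,X)^*$-coinvariants and point-mass coefficients. But the identification of $\text{Im}\,\partial_1$ on both sides, needed to descend $\pi^*$ to coinvariants, relies on the chain map $S_1$, whose definition restricts the coefficient to clopen pieces. As the paper observes in Remark (2), $W_0$ is not res-invariant: restricting an element of $W_0(G,X)$ to a clopen subset generally leaves $W_0$. So the decomposition device fails for $W_0$-coefficients. The paper avoids this by first invoking \cite[Corollary 10]{bnnw} to rewrite $H_0^{uf}(G\curvearrowright X)$ in terms of $H_0(G,N_0(G,X)^*)$ (with a possible extra $\mathbb{R}$ summand controlled by amenability of the action, which is itself COE-invariant), and only then runs the chain-map argument with $N_0$-coefficients, which are res-invariant. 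Your proposal needs this reduction; without it the $W_0^*$ computation does not go through.
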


\begin{thm}\label{thm on b cohomology}
Let $G\curvearrowright X$ and $H\curvearrowright Y$ be topologically free actions. If these two actions are COE, then $H^p_b(G,N_0(G, X)^{**})\cong H^p_b(H, N_0(H, Y)^{**})$ for all $p\geq 0$.
\end{thm}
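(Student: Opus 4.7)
The strategy is to parallel the proof of Theorem~\ref{thm on uf homology}, constructing explicit chain maps between the bounded cochain complexes computing the two groups and showing they give mutually inverse isomorphisms in cohomology. To begin, I would use the COE data $(\phi, c)$ to define a bounded linear map
\[
\Theta\colon N_0(H, Y) \longrightarrow N_0(G, X), \qquad (\Theta f)(x)(g) := f(\phi(x))(c(g, x)).
\]
The key point is that $\Theta f$ genuinely takes values in the augmentation ideal: by topological freeness of both actions, the set of $x \in X$ with trivial $G$-stabilizer and whose image $\phi(x)$ has trivial $H$-stabilizer is dense, and on this set $g \mapsto c(g, x)$ is a bijection $G \to H$ (both parametrize the same orbit). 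Hence $\sum_g (\Theta f)(x)(g) = \sum_h f(\phi(x))(h) = 0$ there, extending to all of $X$ by continuity. The relation $\phi(gx) = c(g,x)\phi(x)$ together with the cocycle identity for $c$ shows that $\Theta$ intertwines the $G$- and $H$-actions in the appropriate twisted sense, and bidualizing produces a weak-$*$ continuous map $\Theta^{**}\colon N_0(H, Y)^{**} \to N_0(G, X)^{**}$ of dual Banach modules.

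Next, I would lift $\Theta^{**}$ to a cochain map
\[
F^p\colon C^p_b(H, N_0(H, Y)^{**}) \longrightarrow C^p_b(G, N_0(G, X)^{**}),
\]
where $F^p(\alpha)$ evaluated on $(g_1, \dots, g_p) \in G^p$ is obtained, after applying $\Theta^{**}$, from the $x$-dependent expression
\[
\alpha\bigl(c(g_1, g_2\cdots g_p x),\ c(g_2, g_3\cdots g_p x),\ \dots,\ c(g_p, x)\bigr),
\]
reinterpreted as an element of the bidual $N_0(G, X)^{**}$. The cocycle identity for $c$ ensures compatibility with the bar differential, and the symmetric construction using $(\psi, c')$ gives a candidate inverse $F'^p$ going the other way.

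To finish, I would verify that $F \circ F'$ and $F' \circ F$ act as the identity at the cochain level: since $\phi$ and $\psi$ are mutual inverses, topological freeness forces $c'(c(g, x), \phi(x)) = g$ on the dense free locus and hence, by continuity, everywhere on $X$, and symmetrically $c(c'(h, y), \psi(y)) = h$. Combined with the cocycle identities these make the compositions equal the identity, so the induced maps on cohomology are inverse isomorphisms. The principal obstacle I anticipate is the bidualization step: verifying that the $x$-dependent formula above genuinely defines an element of $N_0(G, X)^{**}$ (rather than merely of some larger ambient space such as $\ell^\infty(X, \ell^1_0(G)^{**})$), and that the resulting cochain map is bounded and weak-$*$ compatible, will require the same kind of care used in the Brodzki-Niblo-Nowak-Wright and Monod treatments of these bidual coefficient modules.
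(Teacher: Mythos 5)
Your high-level strategy is the same as the paper's (construct explicit bounded cochain maps using the cocycle, and check they are mutually inverse), but there is a genuine gap at exactly the place you flag: the ``reinterpretation'' of the $x$-dependent formula $\alpha(c(g_1, g_2\cdots g_p x),\dots,c(g_p, x))$ as a single element of $N_0(G, X)^{**}$. This is not a technicality to be deferred; it is the mathematical heart of the proof, and without resolving it the construction of $F^p$ does not exist. The difficulty is that the cocycle $c$ is not a group homomorphism, so $\Theta$ (the paper's $\pi$) does \emph{not} intertwine the $G$- and $H$-actions globally; it is only \emph{locally} equivariant (the paper's Lemma~\ref{lem: module structure under restriction}), meaning $\pi(h\xi)|_{X_0}=(g\pi(\xi))|_{X_0}$ on the clopen piece $X_0\subseteq\{x: c(g^{-1},x)=h^{-1}\}$. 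Consequently one cannot simply pull back bounded cochains through $\Theta^{**}$ and a pointwise cocycle substitution; there is no module map lurking in the background.

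The device the paper uses to make this precise is what your sketch is missing: $N_0(G, X)$ is \emph{res-invariant}, i.e.\ closed under restriction to clopen subsets. Since $c$ is continuous with values in a discrete group and $X$ is compact, the map $x\mapsto (c(g_0,\cdot),\dots,c(g_{n-1},\cdot))$ is locally constant and induces a finite clopen partition $X=\bigsqcup_{t_0,\dots,t_{n-1}}[t_0,\dots,t_{n-1}]$. One then defines $S^n(f)(g_0,\dots,g_{n-1})$ not by reinterpreting a function $X\to N_0(G,X)^{**}$, but directly as a functional on $N_0(G,X)^*$ via
$\tau\mapsto\sum_{t_0,\dots,t_{n-1}}\langle f(h_{t_0},\dots,h_{t_{n-1}}),\, \tau|_{[t_0,\dots,t_{n-1}]}\circ\pi\rangle$.
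Boundedness of $S^n$ (needed so the image lies in $C_b$, not merely $C$) is then \emph{not automatic}: it follows from the norm estimate $\sum_i\|\tau|_{X_i}\|\le\|\tau\|$ for any finite clopen partition (the paper's Lemma~\ref{lem: norm estimate for state restriction}), which is a separate small lemma you would need to prove. Verifying the cochain identity $S^{n+1}\partial^n=\partial^n S^n$ also goes through the local equivariance lemma and a careful bookkeeping of how the clopen partitions refine under the bar differential, rather than through a formal cocycle identity. In short: the skeleton is right, but the restriction machinery (res-invariance, the norm bound, and local equivariance) is precisely what turns your sketch into a proof, and none of it is present in your proposal.
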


The paper is organized as follows.

Besides the introduction, the paper contains five other sections. In Section \ref{section: preliminaries}, we review the definitions of group (co)homology and certain coefficient modules. In Section \ref{section: lemma}, we state several lemmas on basic properties of the orbit cocycles and observe that the coefficient modules are closed under restriction maps. These are the crucial ingredients for constructing bijective (co)chain maps later. The next two sections will take up the majority of our paper: 
Section \ref{section: proof of uf-homology case} proves Theorem \ref{thm on uf homology} and Section \ref{section: proof of b-cohomology case} proves Theorem \ref{thm on b cohomology}. The proof of these two theorems uses the same idea: we construct maps between (co)chain complexes directly and then use the whole sections to verify these maps are bijective (co)chain maps. Detailed proof for lower dimensional cases will be included to illustrate the main ideas. We conclude with several remarks in Section \ref{section: remarks} to discuss corollaries of the theorems and other related questions.

Following the convention in \cite{bnnw2, bnnw, monod}, all Banach spaces are assumed to be real.

\section{Preliminaries}\label{section: preliminaries}

\subsection{Group (co)homology}
We briefly recall the definition of group homology and (bounded) group cohomology using bar resolutions, see \cite[\S 4.3]{li}, \cite[Chapter III, \S 1]{brown} and \cite{monod_book}.

Let $G$ be a group and $V$ be a $\mathbb{Z}G$-module. Let $(C_*(V), \partial_*)$ be the chain complex $\dots\overset{\partial_3}{\rightarrow}C_2(V)\overset{\partial_2}{\rightarrow}C_1(V)\overset{\partial_1}{\rightarrow}C_0(V)$
with $C_0(V)=V$, $C_n(V)=C_f(G^n, V)$, where $C_f$ stands for maps with finite support, and
$\partial_n=\sum_{i=0}^n(-1)^i\partial_n^{(i)}$, where
\begin{align*}
\partial_n^{(0)}(f)(g_1,\dots, g_{n-1})&=\sum_{g_0\in G}g_0^{-1}f(g_0,g_1, \dots, g_{n-1}),\\
\partial_n^{(i)}(f)(g_1,\dots, g_{n-1})&=\sum_{\substack{g, \bar{g}\in G\\ g\bar{g}=g_i}}f(g_1, \dots, g_{i-1}, g, \bar{g}, g_{i+1}, g_{n-1})~\text{for} ~1\leq i\leq n-1,\\
\partial_n^{(n)}(f)(g_1,\dots, g_{n-1})&=\sum_{g_n\in G}f(g_1, \dots, g_{n-1}, g_n).
\end{align*}

Then we define $H_n(G, V):=Ker(\partial_n)/Im(\partial_{n+1})$.

Let $(C^*(V), \partial^*)$ be the cochain complex 
$C^0(V)\overset{\partial^0}{\rightarrow}C^1(V)\overset{\partial^1}{\rightarrow}C^2(V)\overset{\partial^2}{\rightarrow}\dots$ with $C^0(V)=V$, $C^n(V)=C(G^n, V)$ for all $n\geq 1$, and $\partial^n=\sum_{i=0}^{n+1}(-1)^i\partial_{(i)}^n$, where
\begin{align*}
\partial_{(0)}^n(f)(g_0,\dots,g_n)&=g_0f(g_1,\dots,g_n),\\
\partial_{(i)}^n(f)(g_0,\dots,g_n)&=f(g_0,\cdots,g_{i-1}g_i,\dots,g_n)~\text{for}~1\leq i\leq n,\\
\partial_{(n+1)}^n(f)(g_0,\dots,g_n)&=f(g_0,\dots,g_{n-1}).
\end{align*}

Then we define $H^n(G, V):=Ker(\partial^n)/Im(\partial^{n-1})$. 

If $V$ is a Banach $G$-module, we may replace $C^*(V)$ by $C^*_b(V)$, where $C^n_b(V):=C_b(G^n, V)$ and $C_b$ stands for uniformly bounded maps, then the group cohomology we get is the bounded cohomology group, written as $H^n_b(G, V)$.

\subsection{Coefficient modules}
The module $V$ we are interested in are certain dual or double dual Banach spaces, i.e.  $N_0(G, X)^*$, $W_0(G, X)^*$ and $N_0(G, X)^{**}$ associated to a continuous action $G\curvearrowright X$. 
Let us recall their definition below \cite{bnnw, bnnw2}.

The space $C(X, \ell^1(G))$ of continuous $\ell^1(G)$-valued functions on $X$ is equipped with the sup-$\ell^1$ norm \[||\xi||=\sup_{x\in X}\sum_{g\in G}|\xi_x(g)|.\]

Here, for $\xi\in C(X, \ell^1(G))$, we write $\xi_x(g):=\xi(x)(g)$. 
We remind the reader that we are using a different notation from the one in \cite{bnnw, bnnw2}, where $\xi_g(x)$ was used.

In this notation, the Banach space $C(X, \ell^1(G))$ is equipped with a natural action of $G$, 
\[(g\cdot \xi)_x(h):=\xi_{g^{-1}x}(g^{-1}h),\]
for each $g, h\in G$ and $x\in X$. 

The summation map on $\ell^1(G)$ induces a continuous map $\sigma: C(X, \ell^1(G))\to C(X)$, where $C(X)$ is equipped with the $\ell^{\infty}$ norm. The space $N_0(G, X)$ is defined to be the pre-image $\sigma^{-1}(0)$ which we identify as $C(X, \ell^1_0(G))$. Then we define $W_0(G, X):=\sigma^{-1}(\mathbb{R})$, where $\mathbb{R}$ is regarded as constant functions on $X$.
Note that $W_0(G, X)=N_0(G, X)\oplus \mathbb{R}$, where $\mathbb{R}$ is identified as constant $\mathbb{R}\delta_e$-valued functions on $X$, here $e$ is the neutral element in $G$. Obviously, both $N_0(G, X)$ and $W_0(G, X)$ are invariant under the above $G$-action since $\sigma$ is $G$-equivariant.
Also recall that if $V$ is a Banach $G$-module, then the Banach dual space $V^*$ is also equipped with a $G$-module structure by requiring $(g\phi)(\xi)=\phi(g^{-1}\xi)$ for $g\in G$, $\phi\in V^*$ and $\xi\in V$.

\section{Properties of orbit cocycles and restriction invariant property} \label{section: lemma}
In this section, we state several lemmas proving basic properties of orbit cocycles and observe that the module $N_0(G, X)$ is restriction invariant. These give us the hint on how to define bijective (co)chain maps between two (co)chain complexes later.

From now on, we use the notations introduced when defining COE, i.e. $\phi: X\approx Y$ and $\psi: Y\approx X$ are homeomorphisms; $c: G\times X\to H$ and $c': H\times Y\to G$ are maps satisfying certain identities.
Concerning COE between topologically free actions, the following property will be used frequently.

\begin{lem}\cite[Lemma 2.10]{li_0} \label{lemma: xin's lemma}
Let $G\curvearrowright X$ and $H\curvearrowright Y$ be topologically free actions that are COE. Then $c'(c(g, x), \phi(x))=g$ for all $g\in G$, $x\in X$. Similarly, $c(c'(h, y),\psi(y))=h$ for all $h\in H$, $y\in Y$.
\end{lem}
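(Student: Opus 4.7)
The plan is to combine the two equivariance identities and then use topological freeness together with discreteness of the acting groups. Following the standard COE convention in which $\psi=\phi^{-1}$, I apply $\psi$ to both sides of $\phi(gx)=c(g,x)\phi(x)$ and then use the identity $\psi(hy)=c'(h,y)\psi(y)$ with $h=c(g,x)$ and $y=\phi(x)$. This yields
\[ gx \;=\; \psi(\phi(gx)) \;=\; \psi\bigl(c(g,x)\phi(x)\bigr) \;=\; c'(c(g,x),\phi(x))\,\psi(\phi(x)) \;=\; c'(c(g,x),\phi(x))\,x. \]
Hence for each $g\in G$ and $x\in X$, the element $g^{-1}c'(c(g,x),\phi(x))$ belongs to the stabilizer $\Stab_G(x)$.

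Next I would invoke topological freeness: the set $X_0\subseteq X$ of points with trivial $G$-stabilizer is dense, and on $X_0$ the displayed calculation forces $c'(c(g,x),\phi(x))=g$ outright. To propagate this from $X_0$ to all of $X$, I note that for each fixed $g\in G$ the map
\[ X\longrightarrow G,\qquad x\longmapsto c'(c(g,x),\phi(x)), \]
is continuous, being a composition of $c(g,\cdot):X\to H$, $\phi:X\to Y$, and $c':H\times Y\to G$. Since $G$ carries the discrete topology, this map is locally constant; equaling $g$ on the dense set $X_0$, it must equal $g$ identically on $X$. The second identity follows by the symmetric argument, swapping the roles of $(G,X,\phi,c)$ and $(H,Y,\psi,c')$.

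The only real subtlety is the upgrade from ``holds on a dense set of free points'' to ``holds for every $x\in X$''; this is settled cleanly by discreteness of $G$, which makes the relevant continuous $G$-valued function locally constant. I do not expect any other obstacle.
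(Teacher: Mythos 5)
Your argument is correct, and it is essentially the standard proof from Li's paper \cite[Lemma~2.10]{li_0}, which this paper cites without reproducing: combine the two equivariance identities (using $\psi=\phi^{-1}$, which is implicit in the paper's usage even though the displayed definition lists $\phi$ and $\psi$ as separate homeomorphisms) to get $g^{-1}c'(c(g,x),\phi(x))\in\Stab_G(x)$, observe the map $x\mapsto c'(c(g,x),\phi(x))$ is continuous into the discrete group $G$ and hence locally constant, and then propagate the identity from the dense set of free points to all of $X$. No gap; the only thing worth flagging is that you should justify $\psi=\phi^{-1}$ from the source definition rather than call it a convention, since the present paper's wording leaves that slightly ambiguous.
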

Clearly, this implies that for all $x\in X$, $G\ni g\mapsto c(g, x)\in H$ is a bijection.

The starting point for our proof is the following lemma.

\begin{lem}\label{lemma: key lemma}
Let $G\curvearrowright X$ and $H\curvearrowright Y$ be topologically free actions which are COE. Then there exists a (linear) map $\pi$ from $C(Y, \ell^1(H))$ onto $C(X, \ell^1(G))$ which is an isometry, and which moreover has the following properties: $\pi(W_0(H, Y))=W_0(G, X)$, $\pi(N_0(H, Y))=N_0(G, X)$ and $\pi(\mathbb{R})=\mathbb{R}$, where $\mathbb{R}$ is identified as constant $\mathbb{R}\delta_e$-valued functions on $X$.
\end{lem}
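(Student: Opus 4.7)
The plan is to write $\pi$ down explicitly via the cocycle $c$ and the homeomorphism $\phi$, using them to transport an element $\eta$ from $Y$ to $X$. Concretely, for $\eta \in C(Y, \ell^1(H))$ set
\[ \pi(\eta)_x(g) := \eta_{\phi(x)}(c(g, x)), \qquad x \in X,\ g \in G. \]
Since by Lemma~\ref{lemma: xin's lemma} the map $g \mapsto c(g, x)$ is a bijection $G \to H$ for each fixed $x$, the change of variables $h = c(g, x)$ immediately gives $\sum_{g} |\pi(\eta)_x(g)| = \sum_{h} |\eta_{\phi(x)}(h)|$, hence $\|\pi(\eta)\| = \|\eta\|$; linearity is obvious from the formula.

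The main technical step, which I expect to be the principal obstacle, is to verify that $\pi(\eta)$ actually lies in $C(X, \ell^1(G))$ rather than merely in $\ell^{\infty}(X, \ell^1(G))$. The argument combines three ingredients: (i) $\pi(\eta)_{x_0}$ is summable, so one picks a finite $F \subset G$ with $\sum_{g \notin F} |\pi(\eta)_{x_0}(g)| < \epsilon$; (ii) since $H$ is discrete and each $c(g, \cdot)$ is continuous, every $c(g, \cdot)$ is locally constant around $x_0$, and intersecting over the finitely many $g \in F$ yields an open neighborhood $U_1 \ni x_0$ on which $c(g, x) = c(g, x_0)$ for every $g \in F$; (iii) continuity of $\eta \circ \phi$ into $\ell^1(H)$ produces $U_2 \ni x_0$ on which $\|\eta_{\phi(x)} - \eta_{\phi(x_0)}\|_1 < \epsilon$. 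On $U_1 \cap U_2$, splitting $\|\pi(\eta)_x - \pi(\eta)_{x_0}\|_1$ into contributions from $g \in F$ and $g \notin F$, and using the $x$-dependent bijection to pass the tail estimate from $G$ to the $x$-independent index set $c(F, x_0) \subset H$, yields the required $O(\epsilon)$-bound. The delicate point is precisely this $x$-dependence of the bijection $g \mapsto c(g, x)$: one cannot control tails uniformly in $x$ directly on the $G$-side.

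For surjectivity I define symmetrically
\[ \pi'(\xi)_y(h) := \xi_{\psi(y)}(c'(h, y)), \]
verify by the same reasoning that it is an isometric linear map into $C(Y, \ell^1(H))$, and then compute $\pi \circ \pi' = \mathrm{id}$ and $\pi' \circ \pi = \mathrm{id}$ by a direct application of both halves of Lemma~\ref{lemma: xin's lemma}, together with the COE convention that $\psi = \phi^{-1}$. This shows $\pi$ is a bijective isometry.

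For the preservation of the distinguished subspaces, the key observation is the identity
\[ \sigma(\pi(\eta))(x) = \sum_{g \in G} \eta_{\phi(x)}(c(g, x)) = \sum_{h \in H} \eta_{\phi(x)}(h) = \sigma(\eta)(\phi(x)), \]
i.e.\ $\sigma \circ \pi$ is the $\phi$-pullback of $\sigma$. This immediately yields $\pi(N_0(H, Y)) \subseteq N_0(G, X)$ and $\pi(W_0(H, Y)) \subseteq W_0(G, X)$, and the reverse inclusions follow from the analogous identity for $\pi'$. Finally, setting $g_1 = g_2 = e_G$ in the cocycle identity forces $c(e_G, x) = e_H$, and this combined with the injectivity of $g \mapsto c(g, x)$ shows that the constant $r\delta_{e_H}$-valued function on $Y$ is sent to the constant $r\delta_{e_G}$-valued function on $X$, giving $\pi(\mathbb{R}) = \mathbb{R}$.
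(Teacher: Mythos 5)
Your proof is correct, and its overall structure---an explicit formula for $\pi$ built from $\phi$ and $c$, continuity via local constancy of the cocycle together with a tail estimate pushed to the $H$-side, a symmetric inverse verified by Lemma~\ref{lemma: xin's lemma}, and subspace preservation via the intertwining $\sigma(\pi(\eta)) = \sigma(\eta)\circ\phi$---matches the paper's proof closely. The one substantive difference is the choice of formula: you set
\[\pi(\eta)_x(g) = \eta_{\phi(x)}(c(g,x)),\]
while the paper sets
\[\pi(\xi)_x(g) = \xi_{\phi(x)}(c(g,g^{-1}x)) = \xi_{\phi(x)}\bigl(c(g^{-1},x)^{-1}\bigr).\]
Both $g\mapsto c(g,x)$ and $g\mapsto c(g,g^{-1}x)$ are bijections $G\to H$, so both formulas give bijective isometries intertwining $\sigma$ through $\phi$, and your version is perfectly adequate for the statement of this lemma. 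The paper's formula, however, is not arbitrary: it is the one that makes Lemma~\ref{lem: module structure under restriction} hold, namely the local equivariance $\pi(h\xi)|_{X_0} = g\bigl(\pi(\xi)|_{g^{-1}X_0}\bigr)$ on clopen $X_0\subseteq\{x: c(g^{-1},x)=h^{-1}\}$, which is the workhorse of Sections~\ref{section: proof of uf-homology case} and~\ref{section: proof of b-cohomology case}. With your $\pi$ one computes instead, for $x\in X_0$,
\[\pi(h\xi)_x(g') = \xi_{\phi(g^{-1}x)}\bigl(c(g^{-1},x)\,c(g',x)\bigr),\]
and $c(g^{-1},x)\,c(g',x)$ is not equal to $c(g^{-1}g', g^{-1}x)$ in general (already at $g'=e$ this would force $c(g^{-1},x)=c(g^{-1},g^{-1}x)$ for all $x$). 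The shift $x\mapsto g^{-1}x$ inside the paper's cocycle is precisely what lets the cocycle identity collapse $h^{-1}c(g', g'^{-1}x)$ into $c(g^{-1}g', g'^{-1}x)$. So your proof of Lemma~\ref{lemma: key lemma} itself is fine, but to carry out the proofs of the main theorems you would need either to switch to the paper's formula or to re-derive a correspondingly modified equivariance.
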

\begin{proof}
We define $\pi: C(Y,\ell^1(H))\to C(X, \ell^1(G))$ by setting $\pi(\xi)=\xi'$, where \[\xi'_x(g):=\xi_{\phi(x)}(c(g, g^{-1}x)).\]

First, we check $\pi$ is a well-defined isometry.

Note that for all $x\in X$, $G\ni g\mapsto c(g^{-1}, x)^{-1}=c(g, g^{-1}x)\in H$ is a bijection. A calculation shows that $\sum_g\xi'_x(g)=\sum_h\xi_{\phi(x)}(h)$ for all $x\in X$ and hence  $||\xi||=||\xi'||$. So $\pi$ is an isometry.

Now we check $\xi'\in C(X, \ell^1(G))$. For any given $x\in X$ and $\epsilon>0$, we need to find $\mathcal{U}_x$, a neighborhood of $x$, such that if $x'\in \mathcal{U}_x$, then $\sum_{g\in G}|\xi'_x(g)-\xi'_{x'}(g)|<\epsilon$. 

Since $\xi\in C(Y, \ell^1(H))$, there exists a small neighborhood $\mathcal{V}_{\phi(x)}$ containing $\phi(x)$ such that if $y'\in\mathcal{V}_{\phi(x)}$, then $\sum_{h\in H}|\xi_{y'}(h)-\xi_{\phi(x)}(h)|<\epsilon/4$. 

Take a large finite set $F\subseteq H$ such that $\sum_{h\not\in F}|\xi_{\phi(x)}(h)|<\epsilon/4$. This implies that if $y'\in\mathcal{V}_{\phi(x)}$, then \[\sum_{h\not\in F}|\xi_{y'}(h)|\leq \sum_{h\not\in F}|\xi_{y'}(h)-\xi_{\phi(x)}(h)|+\sum_{h\not\in F}|\xi_{\phi(x)}(h)|\leq \epsilon/2.\] 

Now, we take $\mathcal{U}_x$ to be any sufficiently small neighborhood of $x$, which is contained in $ \psi(\mathcal{V}_{\phi(x)})$ and satisfies the property that for any $x'\in \mathcal{U}_x$, we have $c(g^{-1}, x)=c(g^{-1}, x')$ for all $g\in K$, where $K:=\{g\in G: ~ c(g^{-1}, x)^{-1}\in F\}$ is a finite set. 

Take any $x'\in \mathcal{U}_x$, note that $g\in K$ iff $c(g^{-1}, x)^{-1}\in F$ iff $c(g^{-1}, x')^{-1}\in F$. Moreover, $x'\in \mathcal{U}_x$ implies $\phi(x')\in \mathcal{V}_{\phi(x)}$.
Hence, 
\begin{align*}
\sum_{g\in G}|\xi'_x(g)-\xi'_{x'}(g)|&=\sum_{g\in K}|\xi'_x(g)-\xi'_{x'}(g)|+\sum_{g\not\in K}|\xi'_x(g)-\xi'_{x'}(g)|\\
&\leq \sum_{h\in F}|\xi_{\phi(x)}(h)-\xi_{\phi(x')}(h)|+\sum_{h\not\in F}|\xi_{\phi(x)}(h)|+\sum_{h\not\in F}|\xi_{\phi(x')}(h)|\\
&\leq  \epsilon/4+\epsilon/4+\epsilon/2=\epsilon.
\end{align*}

Second, it is easy to check $\pi(W_0(H, Y))\subseteq W_0(G, X), \pi(N_0(H, Y))\subseteq N_0(G, X)$ and $\pi(\R)\subseteq \R$ hold. To see $\subseteq$ is really $=$, we need to find the inverse of $\pi$. 

By symmetry, we can define another isometry $L: C(X, \ell^1(G))\to C(Y, \ell^1(H))$ by setting $L(\eta)=\eta'$, where \[\eta'_y(h):=\eta_{\psi(y)}({c'(h, h^{-1}y)}).\] It is straightforward to check that $L$ is the inverse of $\pi$. Indeed, this boils down to check that $c(c'(h, y), \psi(y))=h$ and $c'(c(g, x), \phi(x))=g$, which hold by Lemma \ref{lemma: xin's lemma}. Then using $L$, we know $\pi$ maps the three subspaces onto the corresponding ones.
\end{proof}

\subsection*{Observation}
In the proof of the main theorems, we will use the following observation, which shows that $N_0(G, X)$ is restriction invariant. 

Under the assumptions in the main theorems, for any given $g\in G$, let us list the elements in the finite set $c(g^{-1}, X):=\{c(g^{-1}, x)|~x\in X\}$ as $h_1^{-1},\dots, h_n^{-1}$. Then define $X_i:=\{x\in X: c(g^{-1}, x)=h_i^{-1}\}$. Clearly, $X=\sqcup_iX_i$ and each $X_i$ is clopen since $c$ is continuous. Note that $N_0(G, X)=C(X, \ell^1_0(G))=\oplus_iC(X_i, \ell^1_0(G))$. For each $\xi'\in C(X, \ell^1_0(G))$, let $\xi'|_{X_i}$ be the $i$-th component with respect to the above decomposition, i.e. $(\xi'|_{X_i})_g(x)=\xi'_g(x)$ if $x\in X_i$ and zero otherwise. Clearly, $\xi'|_{X_i}\in C(X, \ell^1_0(G))$. For each $\tau\in N_0(G, X)^*$, define $\tau_i\in  N_0(G, X)^*$ by setting $\tau_i(\xi')=\tau(\xi'|_{X_i})$. Then, we have that $\xi'=\sum_i\xi'|_{X_i}$ and $\tau=\sum_i\tau_i$. Note that the above decomposition is not $G$-equivariant, i.e. $g\tau_i\neq (g\tau)_i$.
Moreover, for any clopen subset $X_0$ of $X$, $\xi|_{X_0}\in N_0(G, X)$ if $\xi\in N_0(G, X)$, hence $\tau|_{X_0}$ is also well-defined. (By convention, $\xi'|_{\emptyset}:=0$ and $\tau|_{\emptyset}:=0$.)

Below we record two more lemmas that we use frequently later to simplify the proof. The first lemma tells us the map $\pi$ is equivariant in a local sense, and the second one tells us how the composition of $\pi$ and $L$ behaves under restrictions.

\begin{lem}\label{lem: module structure under restriction}
Let $g\in G$, $h\in H$ and $\xi\in N_0(H, Y)$. If $X_0\subseteq \{x\in X:~c(g^{-1}, x)=h^{-1}\}$ is clopen, then $(\pi(h\xi))|_{X_0}=(g\pi(\xi))|_{X_0}=g(\pi(\xi)|_{g^{-1}X_0})$, where $\pi$ is the map constructed in Lemma \ref{lemma: key lemma}.
\end{lem}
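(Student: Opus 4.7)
The plan is to verify both equalities directly from the definitions of $\pi$, the module structures on $C(X,\ell^1(G))$ and $C(Y,\ell^1(H))$, the restriction operation, and the cocycle identity for $c$. First I would note that $g^{-1}X_0$ is clopen since $g^{-1}$ acts on $X$ by a homeomorphism, so all the restrictions that appear are well-defined elements of $N_0(G,X)$ via the Observation above (using that $\pi(N_0(H,Y))=N_0(G,X)$ by Lemma \ref{lemma: key lemma}).

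For the first equality $(\pi(h\xi))|_{X_0}=(g\pi(\xi))|_{X_0}$, I fix an arbitrary $x\in X_0$ and $g'\in G$ and compute both sides pointwise. Unwinding the $H$-action followed by $\pi$ gives
\[
\pi(h\xi)_x(g')=(h\xi)_{\phi(x)}\bigl(c(g',(g')^{-1}x)\bigr)=\xi_{h^{-1}\phi(x)}\bigl(h^{-1}c(g',(g')^{-1}x)\bigr),
\]
while unwinding the $G$-action followed by $\pi$ gives
\[
(g\pi(\xi))_x(g')=\pi(\xi)_{g^{-1}x}(g^{-1}g')=\xi_{\phi(g^{-1}x)}\bigl(c(g^{-1}g',(g')^{-1}x)\bigr).
\]
The hypothesis $x\in X_0$, i.e. $c(g^{-1},x)=h^{-1}$, is then used twice: it forces $\phi(g^{-1}x)=c(g^{-1},x)\phi(x)=h^{-1}\phi(x)$ so that the $Y$-arguments of $\xi$ agree; and, applying the cocycle identity for $c$ to the pair $(g^{-1},g')$ at the base point $(g')^{-1}x$, it forces $c(g^{-1}g',(g')^{-1}x)=c(g^{-1},x)c(g',(g')^{-1}x)=h^{-1}c(g',(g')^{-1}x)$, so that the $H$-arguments of $\xi$ agree. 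Outside $X_0$ both sides vanish by definition of the restriction.

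The second equality $(g\pi(\xi))|_{X_0}=g(\pi(\xi)|_{g^{-1}X_0})$ is then purely formal. Evaluating the right-hand side at $x$ and $g'$ gives $(\pi(\xi)|_{g^{-1}X_0})_{g^{-1}x}(g^{-1}g')$, which equals $\pi(\xi)_{g^{-1}x}(g^{-1}g')=(g\pi(\xi))_x(g')$ precisely when $g^{-1}x\in g^{-1}X_0$, i.e. when $x\in X_0$, and vanishes otherwise; this is exactly $(g\pi(\xi))|_{X_0}$.

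I expect no genuine obstacle here; the statement is a compatibility identity and the proof is essentially bookkeeping. The only thing to be careful about is keeping straight which module action (the $H$-action on $C(Y,\ell^1(H))$ versus the $G$-action on $C(X,\ell^1(G))$) is being unwound on each side, and invoking the cocycle identity for $c$ at the correct base point $(g')^{-1}x$ rather than at $x$ itself.
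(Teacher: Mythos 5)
Your proof is correct and follows essentially the same route as the paper's: both compute $\pi(h\xi)_x(g')$ and $(g\pi(\xi))_x(g')$ pointwise, then use $c(g^{-1},x)=h^{-1}$ together with the COE relation $\phi(g^{-1}x)=c(g^{-1},x)\phi(x)$ and the cocycle identity $c(g^{-1}g',g'^{-1}x)=c(g^{-1},x)c(g',g'^{-1}x)$ to match the two. The only cosmetic difference is that the paper writes a single chain of equalities from left to right, whereas you expand both sides separately before comparing; the second (formal) equality is handled identically.
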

\begin{proof}
We may assume $X_0\neq \emptyset$; otherwise, the equality holds trivially as both side equals zero.
Take any $x\in X$ and $g'\in G$, we get
\begin{align*}
[(\pi(h\xi))|_{X_0}]_x(g')&=\begin{cases}
(h\xi)_{\phi(x)}(c(g', g'^{-1}x)),~\mbox{if $x\in X_0$}\\
0,~\mbox{otherwise}
\end{cases}\\
&=\begin{cases}
\xi_{h^{-1}\phi(x)}(h^{-1}c(g', g'^{-1}x)),~\mbox{if $x\in X_0$}\\
0,~\mbox{otherwise}
\end{cases}\\
&=\begin{cases}
\xi_{\phi(g^{-1}x)}(c(g^{-1}g', g'^{-1}x)),~\mbox{if $x\in X_0$}\\
0,~\mbox{otherwise}
\end{cases}\\
&=\begin{cases}
[\pi(\xi)]_{g^{-1}x}(g^{-1}g'),~\mbox{if $x\in X_0$}\\
0,~\mbox{otherwise}
\end{cases}\\
&=\begin{cases}
[g(\pi(\xi))]_x(g'),~\mbox{if $x\in X_0$}\\
0,~\mbox{otherwise}
\end{cases}\\
&=[(g\pi(\xi))|_{X_0}]_x(g').
\end{align*}
Hence, $\pi(h\xi)|_{X_0}=(g\pi(\xi))|_{X_0}$ holds. It is easy to check $(g\pi(\xi))|_{X_0}=g(\pi(\xi)|_{g^{-1}X_0})$ holds by definition.
\end{proof}

\begin{lem}\label{lem: composition of pi and L under restrictions}
Let $X_0$ and $Y_0$ be clopen subsets of $X$ and $Y$ respectively. If $\eta\in N_0(G, X)$ and $\xi\in N_0(H, Y)$, then $\pi(L(\eta)|_{Y_0})|_{X_0}=\eta|_{X_0\cap \psi(Y_0)}$ and $L(\pi(\xi)|_{X_0})|_{Y_0}=\xi|_{Y_0\cap \phi(X_0)}$.
\end{lem}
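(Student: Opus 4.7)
The plan is to verify the identity pointwise by unwinding the definitions of $\pi$, $L$, and the restriction operation, together with Lemma \ref{lemma: xin's lemma}. The second equality follows from the first by swapping the roles of the two actions (and the roles of $\phi, \psi$ and $c, c'$), so I will concentrate on $\pi(L(\eta)|_{Y_0})|_{X_0}=\eta|_{X_0\cap \psi(Y_0)}$.

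First I would fix $x\in X$ and $g\in G$ and evaluate $[\pi(L(\eta)|_{Y_0})|_{X_0}]_x(g)$. By definition of restriction, this is zero unless $x\in X_0$, in which case it equals $[L(\eta)|_{Y_0}]_{\phi(x)}(c(g,g^{-1}x))$. Applying the restriction to $Y_0$ once more, this is zero unless $\phi(x)\in Y_0$, i.e.\ $x\in\psi(Y_0)$ (using $\psi=\phi^{-1}$), in which case it equals
\[
L(\eta)_{\phi(x)}(c(g,g^{-1}x))=\eta_{\psi(\phi(x))}\bigl(c'(c(g,g^{-1}x),\, c(g,g^{-1}x)^{-1}\phi(x))\bigr).
\]

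The key algebraic step is to simplify the argument of $c'$. From the COE intertwining identity $\phi(gz)=c(g,z)\phi(z)$ applied at $z=g^{-1}x$, one gets $\phi(x)=c(g,g^{-1}x)\phi(g^{-1}x)$, equivalently $c(g,g^{-1}x)^{-1}\phi(x)=\phi(g^{-1}x)$. Thus the argument becomes $c'(c(g,g^{-1}x),\phi(g^{-1}x))$, and Lemma \ref{lemma: xin's lemma} (applied at the point $g^{-1}x$ with group element $g$) identifies this with $g$. Combining, whenever $x\in X_0\cap\psi(Y_0)$ the expression equals $\eta_x(g)$, and it vanishes otherwise, which is precisely $[\eta|_{X_0\cap \psi(Y_0)}]_x(g)$.

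For the second equality, the symmetric computation uses the definition $L(\pi(\xi)|_{X_0})_y(h)=(\pi(\xi)|_{X_0})_{\psi(y)}(c'(h,h^{-1}y))$, the intertwining $\psi(hw)=c'(h,w)\psi(w)$ at $w=h^{-1}y$ to rewrite $c'(h,h^{-1}y)^{-1}\psi(y)=\psi(h^{-1}y)$, and the other half of Lemma \ref{lemma: xin's lemma}, namely $c(c'(h,w),\psi(w))=h$. There is no real obstacle here; the only thing one must be careful about is keeping track of the two nested characteristic-function conditions ($x\in X_0$ from the outer restriction, $\phi(x)\in Y_0$ from the inner one) and recognizing that the second condition becomes $x\in\psi(Y_0)$ so that the product is the single restriction to $X_0\cap\psi(Y_0)$.
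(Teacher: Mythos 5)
Your proof is correct and takes essentially the same approach as the paper: unwind the definitions of $\pi$, $L$, and restriction pointwise, then use the COE intertwining identity $\phi(x)=c(g,g^{-1}x)\phi(g^{-1}x)$ to reduce the argument of $c'$ to $\phi(g^{-1}x)$, and finally apply Lemma \ref{lemma: xin's lemma} at $g^{-1}x$ to conclude the value is $\eta_x(g)$ on $X_0\cap\psi(Y_0)$. You spell out the intermediate algebraic step the paper only flags as ``cocycle identity,'' but there is no substantive difference.
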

\begin{proof}
We will check the first equality below, the second one can be checked similarly.

Let $\xi=L(\eta)|_{Y_0}\in N_0(H, Y)$. Take any $g\in G$ and $x\in X$, then
\begin{align*}
&\quad (\pi(\xi)|_{X_0})_x(g)\\
&=\begin{cases}
\xi_{\phi(x)}(c(g, g^{-1}x)),   \quad \mbox{if $x\in X_0$} \\
0,\quad \mbox{otherwise}
\end{cases}\\
&\quad(\mbox{def. of restriction map and $\pi$})\\
&=\begin{cases}
\eta_x(c'(c(g, g^{-1}x),c(g, g^{-1}x)^{-1}\phi(x)))  ,\quad\mbox{if $x\in X_0\cap \psi(Y_0)$}\\
0,\quad  \mbox{otherwise}
\end{cases}\\
&\quad(\mbox{def. of $\xi$ and $L$})\\
&=\begin{cases}
\eta_x(g), \quad\mbox{if $x\in X_0\cap \psi(Y_0)$}\\
0,\quad \mbox{otherwise}
\end{cases}\\
&\quad(\mbox{cocycle identity and Lemma \ref{lemma: xin's lemma}})\\
&=(\eta|_{X_0\cap \psi(Y_0)})_x(g).
\end{align*}
Hence, $(\pi(L(\eta)|_{Y_0}))|_{X_0}=\eta|_{X_0\cap \psi(Y_0)}$.
\end{proof}

\subsection*{Notations}
Here we record some notations used in the context.
\begin{itemize}
\item $\phi: X\approx Y$ and $\psi: Y\to X$ are the homemophisms in the definition of COE.
\item $c: G\times X\to H$ and $c': H\times Y\to X$ denote the orbit cocycles.
\item $\pi$ is the isometry from $N_0(H, Y)$ to $N_0(G, X)$ as defined in Lemma \ref{lemma: key lemma}.
\item $L$ is the inverse map of $\pi$ as defined in Lemma \ref{lemma: key lemma}.
\item $c(g, X)$ denotes the finite set $\{c(g, x): x\in X\}$.
\item $c'(h, Y)$ denotes the finite set $\{c'(h, y): y\in Y\}$. 
\item $X_{g, h}$ denotes the set $\{x\in X: c(g^{-1}, x)=h^{-1}\}$. For later use, we usually simplify $X_{g_i, h}$ (resp. $X_{g, h_j}$) to $X_i$ (resp. $X_j$) if the variable $h$ (resp. $g$) is fixed or clear from the context.
\item $\eta$ denotes an element in $N_0(G, X)$ or $N_0(G, X)^{**}$ depending on the context.
\item $\xi$ denotes an element in $N_0(H, Y)$ or $N_0(H, Y)^{**}$ depending on the context. 
\item $\tau$ denotes an element in $N_0(G, X)^*$.
\item $\nu$ denotes an element in $N_0(H, Y)^*$.
\end{itemize}

\section{Proof of Theorem \ref{thm on uf homology}} \label{section: proof of uf-homology case}

By \cite[Corollary 10]{bnnw}, we know that $H_0^{uf}(G\curvearrowright X)\cong \mathbb{R}\oplus H_0(G, N_0(G, X)^*)$ or $H_0(G, N_0(G, X)^*)$ depending on whether the action is amenable or not. Therefore, to prove Theorem \ref{thm on uf homology}, it suffices to prove the second part, i.e. $H_n(G, N_0(G, X)^*)\cong H_n(H, N_0(H, Y)^*)$ for all $n\geq 0$.

We will construct a (linear) map $S_n: C_f(G^n, N_0(G, X)^*)\to C_f(H^n, N_0(H, Y)^*)$ and check $\partial_nS_n=S_{n-1}\partial_n$ for each $n$, i.e. $S_n$ is a chain map. Moreover, we will show it is an isomorphism by finding the inverse map $T_n$. Clearly, this will induce an isomorphism between homology groups and hence finish the proof.

We present the detailed proof when $n=0, 1$ to illustrate the main ideas. The proof of the general case will be given after the proof of these two cases.
\subsection{Case $n=0$}

Clearly, the map $\pi: N_0(H, Y)\to N_0(G, X)$ as used in Lemma \ref{lemma: key lemma} induces a map, denoted by $\pi^*$, on the dual spaces. 

Define $S_0: =\pi^*$. 

Let $f\in C_f(G, N_0(G, X)^*)$, we define $S_1(f)\in C_f(H, N_0(H, Y)^*)$ by setting 
$S_1(f)=f'$, where 
\[f'(h_1)(\xi):=\sum_j f(g_j)(\pi(\xi)|_{X_j}).\] 

Here $\xi\in N_0(H, Y)$, $c'(h_1, Y):=\{c'(h_1, y): y\in Y\}:=\{g_j~|~j\}, X_j=\{x\in X:~ c(g_j^{-1}, x)=h_1^{-1}\}$. Strictly speaking, it is better to use notation $X_{g_j, h_1}$ for $X_j$, but we often simplify the subscripts if no confusion arises.

Clearly, $S_0, S_1$ are well-defined. Indeed, to see $S_1$ is well-defined, observe that if $supp(f)=F$, then $supp(S_1(f))\subseteq \cup_{g\in F}c(g, X)$, which is finite once $F$ is a finite set.\\

\textbf{Step 1}: we check $\partial_1S_1=S_0\partial_1$. \\

Let $f\in C_f(G, N_0(G, X)^*)$ and $f'=S_1(f)$. Recall $\partial_1(f')=\sum_{h_1\in H}h_1^{-1}f'(h_1)-\sum_{h_1\in H}f'(h_1)$. We need to show $\partial_1S_1(f)=S_0\partial_1(f)$; equivalently, 
\[\sum_{h_1\in H}h_1^{-1}f'(h_1)-\sum_{h_1\in H}f'(h_1)=\sum_{g\in G}\pi^*(g^{-1}f(g))-\sum_{g\in G}\pi^*(f(g)).\]  

It suffices to check 
\begin{align}
\sum_{h_1\in H}h_1^{-1}f'(h_1)&=\sum_{g\in G}\pi^*(g^{-1}f(g)),\label{eq 1 for thm 1.1, n=1.}\\
\sum_{h_1\in H}f'(h_1)&=\sum_{g\in G}\pi^*(f(g))\label{eq 2 for thm 1.1, n=1}.
\end{align}
To check (\ref{eq 2 for thm 1.1, n=1}), take any $\xi\in N_0(H, Y)$, then 

\begin{align*}
\sum_{h_1}f'(h_1)(\xi)&=\sum_{h_1}\sum_jf(g_j)(\pi(\xi)|_{X_j})=\sum_g\sum_{h_1, h_1\in c(g, X)}f(g)(\pi(\xi)|_{X_{g, h_1}})\\
&=\sum_gf(g)(\pi(\xi))=\sum_{g}\pi^*(f(g))(\xi).
\end{align*}
The 2nd last equality holds since $X=\sqcup_{h_1, h_1\in c(g, X)}X_{g, h_1}$ for every $g$. 

Similarly, let us check (\ref{eq 1 for thm 1.1, n=1.}) holds below.
\begin{align*}
\sum_{h_1}(h_1^{-1}f'(h_1))(\xi)&=\sum_{h_1}f'(h_1)(h_1\xi)=\sum_{h_1}\sum_jf(g_j)(\pi(h_1\xi)|_{X_j})\\
&=\sum_g\sum_{h_1, h_1\in c(g, X)}f(g)(\pi(h_1\xi)|_{X_{g, h_1}})\\
&=\sum_g\sum_{h_1, h_1\in c(g, X)}f(g)((g\pi(\xi))|_{X_{g, h_1}})~\mbox{(by Lemma \ref{lem: module structure under restriction})}\\
&=\sum_g f(g)((g\pi(\xi))|_{\sqcup_{h_1\in c(g, X)}X_{g, h_1}})\\
&=\sum_g f(g)(g\pi(\xi))~\mbox{(as $X=\sqcup_{h_1\in c(g, X)}X_{g, h_1}$)}\\
&=\sum_g\pi^{**}(g^{-1}f(g))(\xi).
\end{align*}

\textbf{Step 2}: $S_0$ is an isomorphism.\\

Recall in the proof of Lemma \ref{lemma: key lemma}, there exists some map $L: N_0(G, X)\to N_0(H, Y)$ defined by setting $L(\eta)=\eta'$, where $\eta'_h(y):=\eta_{\psi(y)}({c'(h, h^{-1}y)})$. By symmetry, it also induces a well-defined map $T_0:=L^*: N_0(H, Y)^*\to N_0(G, X)^*$. Clearly, $\pi^*L^*=id$ and $L^*\pi^*=id$.

\subsection{Case $n=1$}

Write $c'(h_0, Y)=\{g_i|~i\}$, $c'(h_1, Y)=\{g_j|~j\}$, $c'(h_2, Y)=\{g_k|~k\}$, $c'(h, Y)=\{g_s|~s\}$ and $c'(\bar{h}, Y)=\{g_t|~t\}$. In this subsection, we will use the following notations.
\begin{align*}
X_i&:=\{x: c(g_i^{-1}, x)=h_0^{-1}\},\quad X_j:=\{x: c(g_j^{-1}, x)=h_1^{-1}\},\\
X_k&:=\{x: c(g_k^{-1}, x)=h_2^{-1}\},\quad X_s:=\{x: c(g_s^{-1}, x)=h^{-1}\}, \\
X_t&:=\{x: c(g_t^{-1}, x)=\bar{h}^{-1}\}. 
\end{align*}
Strictly speaking, it is better to use the notation $X_{g_i, h_0}, X_{g_j, h_1}, X_{g_k, h_2}, X_{g_s, h}, X_{g_t, \bar{h}}$ for $X_i, X_j, X_k, X_s, X_t$ respectively, but we often simplify the subscripts as the variables $h_0, h_1, h_2, h, \bar{h}$ are usually fixed in the context.

Let $\theta\in C_f(G^2, N_0(G, X)^*)$, we define $S_2: C_f(G^2, N_0(G, X)^*)\to C_f(H^2, N_0(H, Y)^*)$ by setting $\theta'=S_2\theta$, where for any $\xi\in N_0(H, Y)$,
\[\theta'(h_0, h_1)(\xi):=\sum_{i, j}\theta(g_i, g_j)(\pi(\xi)|_{g_iX_j\cap X_i}).\]

Note that $S_2$ is well-defined. Indeed, observe that \[supp(\theta')\subseteq \bigcup_{(g_i, g_j)\in supp(\theta)}c(g_i, X)\times c(g_j, X).\] Therefore, $supp(\theta)$ is finite implies $supp(\theta')$ is finite.

Write $f=\partial_2\theta$ and $f'=S_1f$, from the definition of $\partial_2$, we know 
\[f(g_1)=\sum_{g_0}g_0^{-1}\theta(g_0, g_1)-\sum_{g, \bar{g}, g\bar{g}=g_1}\theta(g, \bar{g})+\sum_{g_2}\theta(g_1, g_2).\]

\textbf{Step 1}: we check $S_1\partial_2=\partial_2S_2$.\\

Evaluate both sides at $\theta$ and use the above notations, we are left to show the following identity holds.
\[f'(h_1)=\sum_{h_0}h_0^{-1}\theta'(h_0, h_1)-\sum_{\substack{h, \bar{h}\\ h\bar{h}=h_1}}\theta'(h, \bar{h})+\sum_{h_2}\theta'(h_1, h_2).\]

Let $\xi\in N_0(H, Y)$, a calculation shows:
\begin{align*}
&\quad f'(h_1)(\xi)\\
&= \sum_jf(g_j)(\pi(\xi)|_{X_j})\\
&=\sum_j[\sum_{g}\theta(g, g_j)(g(\pi(\xi)|_{X_j}))-\sum_{\substack{g, \bar{g}\\ g\bar{g}=g_j}}\theta(g, \bar{g})(\pi(\xi)|_{X_j})+\sum_{g}\theta(g_j, g)(\pi(\xi)|_{X_j})].\\
\end{align*}
\begin{align*}
&\quad[\sum_{h_0}h_0^{-1}\theta'(h_0, h_1)-\sum_{\substack{h, \bar{h}\\ h\bar{h}=h_1}}\theta'(h, \bar{h})+\sum_{h_2}\theta'(h_1, h_2)](\xi)\\
  &=\sum_{h_0}\theta'(h_0, h_1)(h_0\xi)-\sum_{\substack{h, \bar{h}\\ h\bar{h}=h_1}}\theta'(h, \bar{h})(\xi)+\sum_{h_2}\theta'(h_1, h_2)(\xi) \\
  &= \sum_{h_0}\sum_{i, j}\theta(g_i, g_j)(\pi(h_0\xi)|_{g_iX_j\cap X_i})-\sum_{\substack{h, \bar{h}\\ h\bar{h}=h_1}}\sum_{s, t}\theta(g_s, g_t)(\pi(\xi)|_{g_sX_t\cap X_s})\\
  &\quad{}+\sum_{h_2}\sum_{j, k}\theta(g_j, g_k)(\pi(\xi)|_{g_jX_k\cap X_j}). \\
\end{align*}
Comparing the two expressions above, we just need to prove the corresponding terms are equal, i.e.

\begin{align}
\sum_j\sum_{g}\theta(g, g_j)(g(\pi(\xi)|_{X_j}))&=\sum_{h_0}\sum_{i, j}\theta(g_i, g_j)(\pi(h_0\xi)|_{g_iX_j\cap X_i})\label{eq6},\\
\sum_j\sum_{\substack{g, \bar{g}\\ g\bar{g}=g_j}}\theta(g, \bar{g})(\pi(\xi)|_{X_j})&=\sum_{\substack{h, \bar{h}\\ h\bar{h}=h_1}}\sum_{s, t}\theta(g_s, g_t)(\pi(\xi)|_{g_sX_t\cap X_s})\label{eq7},\\
\sum_j\sum_{g}\theta(g_j, g)(\pi(\xi)|_{X_j})&=\sum_{h_2}\sum_{j, k}\theta(g_j, g_k)(\pi(\xi)|_{g_jX_k\cap X_j})\label{eq8}.
\end{align}

To check \eqref{eq6}, apply Lemma \ref{lem: module structure under restriction} to get $\pi(h_0\xi)|_{g_iX_j\cap X_i}=g_i(\pi(\xi)|_{X_j\cap g_i^{-1}X_i})$. 

Then, RHS of \eqref{eq6} = $\sum_j\sum_{h_0, i}\theta(g_i, g_j)(g_i(\pi(\xi)|_{X_j\cap g_i^{-1}X_i}))$.

Now it suffices to prove for every $j$, 
\[\sum_{h_0, i}\theta(g_i, g_j)(g_i(\pi(\xi)|_{X_j\cap g_i^{-1}X_i}))=\sum_{g}\theta(g, g_j)(g(\pi(\xi)|_{X_j})).\]

First, recall that $g_i^{-1}X_i=\{x: c(g_i, x)=h_0\}$, which we denote by $Z_{g_i, h_0}$.

Clearly, for every $g\in G$, $X=\sqcup_{h_0\in c(g, X)}Z_{g, h_0}$, so 
\begin{align*}
\sum_{h_0, i}\theta(g_i, g_j)(g_i(\pi(\xi)|_{X_j\cap g_i^{-1}X_i}))
&=\sum_{g}\sum_{h_0\in c(g, X)}\theta(g, g_j)(g(\pi(\xi)|_{X_j\cap Z_{g, h_0}}))\\
&=\sum_{g}\theta(g, g_j)(g(\pi(\xi)|_{X_j})).
\end{align*}

To check \eqref{eq7}, first, observe that we have a bijection/reordering between the two index sets:

\begin{align*}
\left\{ (h, \bar{h}, g_s, g_t)
    \begin{tabular}{|l}
      $h\bar{h}=h_1$ \\
      $g_s\in c'(h, Y), g_t\in c'(\bar{h}, Y)$ \\
      $g_sX_{g_t, \bar{h}}\cap X_{g_s, h}\neq \emptyset$
\end{tabular}
\right\}
\end{align*}
and

\begin{align*}
\left\{ (g_j, g, \bar{g}, h, \bar{h})
\begin{tabular}{|l}
$h\bar{h}=h_1, g\bar{g}=g_j$\\
$g_j\in c'(h_1, Y), h\in c(g, X), \bar{h}\in c(\bar{g}, X)$\\
$gX_{\bar{g}, \bar{h}}\cap X_{g, h}\neq \emptyset$
\end{tabular}
\right\}.
\end{align*}

%$$\{(h, \bar{h}, g_s, g_t):~ h\bar{h}=h_1, g_s\in c'(h, Y), g_t\in c'(\bar{h}, Y), g_sX_{g_t, \bar{h}}\cap X_{g_s, h}\neq \emptyset\}$$
%and $$\{(g_j, g, \bar{g}, h, \bar{h}):~ h\bar{h}=h_1, g\bar{g}=g_j, g_j\in c'(h_1, Y), h\in c(g, X), \bar{h}\in c(\bar{g}, X), gX_{\bar{g}, \bar{h}}\cap X_{g, h}\neq \emptyset\}$$

Indeed, one can define a bijection as follows: $g_s\mapsto g, g_t\mapsto \bar{g}, h\mapsto h, \bar{h}\mapsto \bar{h}$.

Then we have the following.

\begin{align*}
\text{RHS of \eqref{eq7}}
&= \sum_{\substack{h, \bar{h}\\ h\bar{h}=h_1}}\sum_{s, t}\theta(g_s, g_t)(\pi(\xi)|_{g_sX_{g_t, \bar{h}}\cap X_{g_s, h}})\\
&= \sum_j\sum_{\substack{g, \bar{g}\\ g\bar{g}=g_j}}\sum_{\substack{h,\bar{h}, h\bar{h}=h_1\\ h\in c(g,X), \bar{h}\in c(\bar{g}, X)}}\theta(g, \bar{g})(\pi(\xi)|_{gX_{\bar{g}, \bar{h}}\cap X_{g, h}})\\
&\quad(\mbox{use the above bijection to do change of variables})\\
&= \sum_j\sum_{\substack{g, \bar{g}\\ g\bar{g}=g_j}}\theta(g, \bar{g})(\pi(\xi)|_{X_j})=\text{LHS of \eqref{eq7}}.
\end{align*}
The 2nd last equality holds since for every $g, \bar{g}$ with $g\bar{g}=g_j$, 
\[X_j=\bigsqcup_{\substack{h,\bar{h}, h\bar{h}=h_1\\ h\in c(g,X), \bar{h}\in c(\bar{g}, X)}}(gX_{\bar{g},\bar{h}}\cap X_{g, h}),\] which can be checked easily.

To check \eqref{eq8}, observe for every $j$, 
\begin{align*}
&\quad\sum_{h_2, k}\theta(g_j, g_k)(\pi(\xi)|_{g_jX_k\cap X_j})\\
&=\sum_{h_2, k}\theta(g_j, g_k)(\pi(\xi)|_{g_jX_{g_k, h_2}\cap X_j})~(\mbox{as $X_k=X_{g_k, h_2}$ by convention})\\
&=\sum_{g}\sum_{\substack{h_2\\h_2\in c(g, X)}}\theta(g_j, g)(\pi(\xi)|_{g_jX_{g, h_2}\cap X_j})\\
&=\sum_g\theta(g_j, g)(\pi(\xi)|_{X_j}).
\end{align*}
The last equality holds since $X=\sqcup_{h_2~s.t.~ h_2\in c(g, X)}g_jX_{g, h_2}$ for every $g$.\\

\textbf{Step 2}: $S_1$ is an isomorphism.\\

Recall $L: N_0(G, X)\to N_0(H, Y)$ is the inverse of $\pi$. Then we may define a map $T_1: C_f(H, N_0(H, Y)^*)\to C_f(G, N_0(G, X)^*)$ by setting $T_1(f')=f''$, where 
\[f''(g_1)(\eta):=\sum_jf'(h_j)(L(\eta)|_{Y_j}).\]
Here $\eta\in N_0(G, X)$ and  $Y_j:=\{y\in Y:~ c'(h_j^{-1}, y)=g_1^{-1}\}$.

Let $f\in C_f(G, N_0(G, X)^*)$, write $f'=S_1f$, $f''=T_1f'$. We check $T_1S_1=id$. Equivalently, $f''=f$.
A calculation shows
\begin{align*}
&\quad f''(g_1)(\eta)\\
&=\sum_jf'(h_j)(L(\eta)|_{Y_j})~(\mbox{def. of $T_1$})\\
&=\sum_j\sum_if(g_i)(\pi(L(\eta)|_{Y_j})|_{X_{g_i, h_j}})~(\mbox{def. of $S_1$})\\
&=\sum_{j, i}f(g_i)(\eta|_{X_{g_i, h_j}\cap \psi(Y_j)}).~(\mbox{Lemma \ref{lem: composition of pi and L under restrictions}})
\end{align*}
Recall by our notation, $X_{g_i, h_j}:=\{x\in X: c(g_i^{-1}, x)=h_j^{-1}\}$. 
Now, it is easy to check that $X_{g_i, h_j}\cap \psi(Y_j)=\emptyset$ unless $g_i=g_1$. And $X_{g_1, h_j}\cap\psi(Y_j)=X_{g_1, h_j}$ by Lemma \ref{lemma: xin's lemma}.

Hence, $f''(g_1)(\eta)=\sum_jf(g_1)(\eta|_{X_{g_1, h_j}})=f(g_1)(\eta)$.

So $T_1S_1=id$. By symmetry, $S_1T_1=id$.

\subsection{General case}
Based on the definition of $S_i$ for $i=0, 1$, it is natural to consider the following general formula for $S_n$.

For all $n\geq 2$, let $\theta\in C_f(G^n, N_0(G, X)^*)$, define $S_n(\theta)=\theta'\in C_f(H^n, N_0(H, Y)^*)$ by setting
for every $\xi\in N_0(H, Y)$,
\begin{align}{\label{eq for thm 1.1: def of S_n}}
\theta'(h_0,\dots,h_{n-1})(\xi):=\sum_{t_0,\dots,t_{n-1}}\theta(g_{t_0},\dots,g_{t_{n-1}})(\pi(\xi)|_{[t_0,\ldots,t_{n-1}]}).
\end{align}

Here, $c'(h_i^{-1}, Y)=\{g_{t_i}^{-1}: t_i\in T_i\}$ for some finite set $T_i$ for all $i=0,\cdots, n-1$, 
\begin{align}\label{eq for lem 1.1: def of [t0,...,tn]}
\begin{split}
X_{t_i}&:=\{x: c(g_{t_i}^{-1}, x)=h_i^{-1}\},\\
[t_0,\ldots,t_{n-1}]&:=X_{t_0}\cap g_{t_0}X_{t_1}\cap\dots\cap (g_{t_0}\cdots g_{t_{n-2}})X_{t_{n-1}}.
\end{split}
\end{align}

Strictly speaking, the index set for the above $\sum$ in  (\ref{eq for thm 1.1: def of S_n}) should be $(g_{t_0},\ldots, g_{t_{n-1}})$, but we simplify it to $(t_0,\ldots, t_{n-1})$ for convenience. Again, we will switch to the notation $X_{g_{t_i}, h_i}$ once using $X_{t_i}$ causes confusion. We also reserve the notation $g_i$ for an arbitrary element in $G$.

Clearly, $S^n$ is well-defined. Indeed, observe that
\[supp(\theta')\subseteq \bigcup_{(g_{t_0},\cdots, g_{t_{n-1}})\in supp(\theta)}c(g_{t_0}, X)\times \cdots\times c(g_{t_{n-1}}, X).\]
Hence, $supp(\theta)$ is finite implies $supp(\theta')$ is finite.

Our goal is to check that $\partial_nS_n=S_{n-1}\partial_n$ holds and $S_n$ is a bijection for each $n\geq 2$. This will finish the proof of Theorem \ref{thm on uf homology}.\\

\textbf{Step 1}: We claim $\partial_nS_n=S_{n-1}\partial_n$.\\

Take any $\theta\in C_f(G^n, N_0(G, X)^*)$, write $\theta'=S_n(\theta)$ and $f=\partial_n\theta$. 
Fix any $(h_1, \ldots, h_{n-1})\in H^{n-1}$ and any $\xi\in N_0(H, Y)$.

Now, we calculate both $[\partial_nS_n(\theta)](h_1, \ldots, h_{n-1})(\xi)
$ and $[S_{n-1}\partial_n(\theta)](h_1, \ldots, h_{n-1})(\xi)$ and show they are equal.

First, we have the following calculation.
\begin{align*}
&\quad [\partial_nS_n(\theta)](h_1, \ldots, h_{n-1})\\
&=[\partial_n\theta'](h_1, \ldots, h_{n-1})\\
&=\sum_{h_0\in H}h_0^{-1}\theta'(h_0,\ldots, h_{n-1})+\sum_{i=1}^{n-1}(-1)^i\sum_{\substack{h, \bar{h}\in H\\ h\bar{h}=h_i}}\theta'(h_1,\ldots, h_{i-1},h,\bar{h}, h_{i+1},\ldots, h_{n-1})\\
&\quad{} +(-1)^n\sum_{h_n\in H}\theta'(h_1,\ldots, h_n).
\end{align*}
Evaluate both sides of the above at the element $\xi$, then plug in the definition of $\theta'$, i.e. (\ref{eq for thm 1.1: def of S_n}) into the RHS of the above, we deduce that
\begin{align}\label{eq for thm 1.1: LHS of chain equality}
\begin{split}
&\quad\quad [\partial_nS_n(\theta)](h_1, \ldots, h_{n-1})(\xi)\\
&=\sum_{h_0\in H} \sum_{t_0,\ldots,t_{n-1}}\big \langle\theta(g_{t_0},\ldots, g_{t_{n-1}}), \pi(h_0\xi)|_{[t_0, \ldots, t_{n-1}]} \big\rangle\\
&\quad+\sum_{i=1}^{n-1}(-1)^i\sum_{\substack{h, \bar{h}\in H\\ h\bar{h}=h_i}} \sum_{\substack{t_1,\ldots, t_{i-1}, s\\ k, t_{i+1},\ldots, t_{n-1}}}\big\langle\theta(g_{t_1},\ldots, g_{t_{i-1}}, g_s, g_k, g_{t_{i+1}},\ldots, g_{t_{n-1}}),\\
&\qquad\qquad\qquad\qquad \pi(\xi)|_{[t_1,\ldots, t_{i-1}, s, k, t_{i+1}, \ldots, t_{n-1}]}\big \rangle\\
&\quad+(-1)^n\sum_{h_n\in H} \sum_{t_1,\ldots, t_n}\big\langle\theta(g_{t_1},\ldots, g_{t_n}), \pi(\xi)|_{[t_1,\ldots, t_n]} \big\rangle.
\end{split}
\end{align}
Here, we use $\langle -, - \rangle$ to denote the evaluation, i.e. evaluating the first entry (a function) at the second entry (a variable).
Let us recall some notations used here:
\begin{align*}
X_s:=\{x: c(g_s^{-1}, x)=h^{-1}\},\quad X_k:=\{x: c(g_k^{-1}, x)=\bar{h}^{-1}\}.
\end{align*}
\begin{multline}\label{eq for thm 1.1: def of [t1,...s,k,...]}
[t_1,\ldots, t_{i-1}, s, k, t_{i+1}, \ldots, t_{n-1}]:=\\
 X_{t_1}\cap g_{t_1}X_{t_2}\cap\cdots\cap (g_{t_1}\cdots g_{t_{i-2}})X_{t_{i-1}}\cap (g_{t_1}\cdots g_{t_{i-1}})X_s\\
\cap (g_{t_1}\cdots g_{t_{i-1}}g_s)X_k\cap\cdots\cap(g_{t_{1}}\cdots g_{t_{i-1}}g_sg_kg_{t_{i+1}}\cdots g_{t_{n-2}})X_{t_{n-1}}.
\end{multline}
Second, another calculation tells us the following.
\begin{align}\label{eq for thm 1.1: prestage of RHS of chain equality}
\begin{split}
&\quad [S_{n-1}\partial_n\theta](h_1, \ldots, h_{n-1})(\xi)\\
&=[S_{n-1}f](h_1, \ldots, h_{n-1})(\xi)\\
&=\sum_{t_1,\ldots, t_{n-1}}f(g_{t_1},\ldots, g_{t_{n-1}})(\pi(\xi)|_{[t_1,\ldots, t_{n-1}]}).\quad(\mbox{def. of $S_{n-1}$})
\end{split}
\end{align}
Now, recall that $f=\partial_n\theta$ and the definition of $\partial_n$ from Section \ref{section: preliminaries}, we get
\begin{align*}
\quad f(g_{t_1},\ldots, g_{t_{n-1}})
&=\sum_{g_0\in G}g_0^{-1}\theta(g_0, g_{t_1},\ldots, g_{t_{n-1}})\\
&\quad+\sum_{i=1}^{n-1}(-1)^i\sum_{\substack{g, \bar{g}\in G\\g\bar{g}=g_{t_i}}}\theta(g_{t_1},\ldots, g_{t_{i-1}}, g, \bar{g}, g_{t_{i+1}},\ldots, g_{t_{n-1}})\\
&\quad+(-1)^n\sum_{g_n\in G}\theta(g_{t_1},\ldots, g_{t_{n-1}}, g_n).
\end{align*}

Evaluate both sides of the above at $\pi(\xi)|_{[t_1,\ldots, t_{n-1}]}$ and combine with (\ref{eq for thm 1.1: prestage of RHS of chain equality}), we get 

\begin{align}\label{eq for thm 1.1: RHS of chain equality}
\begin{split}
& \quad [S_{n-1}\partial_n\theta](h_1, \ldots, h_{n-1})(\xi)\\
&=\sum_{t_1, \ldots, t_{n-1}}\sum_{g_0\in G}\langle g_0^{-1}\theta(g_0, g_{t_1},\ldots, g_{t_{n-1}}), \pi(\xi)|_{[t_1,\ldots, t_{n-1}]}\rangle\\
&\quad+\sum_{t_1,\ldots, t_{n-1}}\sum_{i=1}^{n-1}(-1)^i\sum_{\substack{g, \bar{g}\in G\\g\bar{g}=g_{t_i}}}\big\langle \theta(g_{t_1},\ldots, g_{t_{i-1}}, g, \bar{g}, g_{t_{i+1}},\ldots, g_{t_{n-1}}), \pi(\xi)|_{[t_1,\ldots, t_{n-1}]}\big\rangle\\
&\quad+\sum_{t_1,\ldots, t_{n-1}}(-1)^n\sum_{g_n\in G}\big\langle \theta(g_{t_1},\ldots, g_{t_{n-1}}, g_n), \pi(\xi)|_{[t_1,\ldots, t_{n-1}]}\big \rangle.
\end{split}
\end{align}
We aim to show (\ref{eq for thm 1.1: LHS of chain equality})=(\ref{eq for thm 1.1: RHS of chain equality}). To do this, we just need to compare all the corresponding summands and prove they are equal, i.e. it suffices to check the following hold. 

(a) \begin{multline*}
 \sum_{h_0\in H} \sum_{t_0,\ldots,t_{n-1}}\big \langle\theta(g_{t_0},\ldots, g_{t_{n-1}}), \pi(h_0\xi)|_{[t_0, \ldots, t_{n-1}]} \big\rangle\\
=\sum_{t_1, \ldots, t_{n-1}}\sum_{g_0\in G}\langle g_0^{-1}\theta(g_0, g_{t_1},\ldots, g_{t_{n-1}}), \pi(\xi)|_{[t_1,\ldots, t_{n-1}]}\rangle.
\end{multline*}

%(b)\begin{align*}
%&\sum_{h_0\in H} \sum_{t_0,\ldots,t_{n-1}}\big\langle\theta(g_{t_0},\ldots, g_{t_{n-1}}), \pi(h_0\xi)|_{[t_0, \ldots, t_{n-1}]}\big \rangle\\
%&=\sum_{t_1, \ldots, t_{n-1}}\sum_{g_0\in G}\big\langle (g_0^{-1}\theta(g_0, g_{t_1},\ldots, g_{t_{n-1}}), \pi(\xi)|_{[t_1,\ldots, t_{n-1}]}\big\rangle.
%\end{align*}

(b) For any $1\leq i\leq n-1$, 
\begin{multline*}
\sum_{\substack{h, \bar{h}\in H\\ h\bar{h}=h_i}} \sum_{\substack{t_1,\ldots, t_{i-1}, s\\ k, t_{i+1},\ldots, t_{n-1}}}\big\langle\theta(g_{t_1},\ldots, g_{t_{i-1}}, g_s, g_k, g_{t_{i+1}},\ldots, g_{t_{n-1}}), \pi(\xi)|_{[t_1,\ldots, t_{i-1}, s, k, t_{i+1}, \ldots, t_{n-1}]} \big\rangle\\
=\sum_{t_1,\ldots, t_{n-1}}\sum_{\substack{g, \bar{g}\in G\\g\bar{g}=g_{t_i}}}\big\langle \theta(g_{t_1},\ldots, g_{t_{i-1}}, g, \bar{g}, g_{t_{i+1}},\ldots, g_{t_{n-1}}), \pi(\xi)|_{[t_1,\ldots, t_{n-1}]}\big\rangle.
\end{multline*}

(c)\begin{multline*}
\sum_{h_n\in H} \sum_{t_1,\ldots, t_n}\big\langle\theta(g_{t_1},\ldots, g_{t_n}), \pi(\xi)|_{[t_1,\ldots, t_n]} \big\rangle\\
=\sum_{t_1,\ldots, t_{n-1}}\sum_{g_n\in G}\big\langle \theta(g_{t_1},\ldots, g_{t_{n-1}}, g_n), \pi(\xi)|_{[t_1,\ldots, t_{n-1}]}\big \rangle.
\end{multline*}

We prove (a), (b), (c) below respectively.

\begin{proof}[Proof of (a)]
By Lemma \ref{lem: module structure under restriction}, $\pi(h_0\xi)|_{[t_0,\ldots, t_{n-1}]}=g_{t_0}(\pi(\xi)|_{g_{t_0}^{-1}[t_0,\ldots, t_{n-1}]}).$
Then,
\begin{align}\label{eq for thm 1.1: check (a) in general case}
\begin{split}
& \quad\quad \mbox{Top expression in (a)}\\
&=\quad \sum_{h_0\in H}\sum_{t_0,\ldots,t_{n-1}}\big \langle \theta(g_{t_0},\ldots, g_{t_{n-1}}), \pi(h_0\xi)|_{[t_0, \ldots, t_{n-1}]} \big\rangle\\
&=\quad \sum_{h_0\in H} \sum_{t_0,\ldots,t_{n-1}}\big \langle\theta(g_{t_0},\ldots, g_{t_{n-1}}), g_{t_0}(\pi(\xi)|_{g_{t_0}^{-1}[t_0,\ldots, t_{n-1}]}) \big\rangle\\
&=\quad \sum_{h_0\in H}\sum_{t_0,\ldots,t_{n-1}}\big \langle g_{t_0}^{-1}\theta(g_{t_0},\ldots, g_{t_{n-1}}), \pi(\xi)|_{g_{t_0}^{-1}X_{t_0}\cap [t_1,\ldots, t_{n-1}]} \big\rangle\\
&\quad(\mbox{def. of dual module and by (\ref{eq for lem 1.1: def of [t0,...,tn]}), $g_{t_0}^{-1}[t_0,\ldots, t_{n-1}]=g_{t_0}^{-1}X_{t_0}\cap [t_1,\ldots, t_{n-1}]$})\\
&=\quad \sum_{t_1,\ldots,t_{n-1}}\sum_{h_0\in H}\sum_{t_0}\big \langle g_{t_0}^{-1}\theta(g_{t_0},\ldots, g_{t_{n-1}}), \pi(\xi)|_{g_{t_0}^{-1}X_{t_0}\cap [t_1,\ldots, t_{n-1}]} \big\rangle\\
&\quad(\mbox{switching sums is possible as $t_1,\ldots, t_{n-1}$ and $h_0$ are independent})\\
&= \quad \sum_{t_1,\ldots,t_{n-1}}\sum_{h_0\in H}\sum_{\substack{g_{t_0}\\ g_{t_0}\in c'(h_0, Y)}}\big \langle g_{t_0}^{-1}\theta(g_{t_0},\ldots, g_{t_{n-1}}), \pi(\xi)|_{g_{t_0}^{-1}X_{g_{t_0}, h_0}\cap [t_1,\ldots, t_{n-1}]} \big\rangle\\
& \quad(\mbox{notation convention, check the explanation given when defing $S_n$, i.e. (\ref{eq for thm 1.1: def of S_n})})\\
&=\quad \sum_{t_1,\ldots,t_{n-1}}\sum_{g_0\in G}\sum_{h_0\in c(g_0, X)}\big \langle g_0^{-1}\theta(g_0,\ldots, g_{t_{n-1}}), \pi(\xi)|_{g_0^{-1}X_{g_0, h_0}\cap [t_1,\ldots, t_{n-1}]} \big\rangle.
\end{split}
\end{align}
The last equality holds for the following reason:

The map $h_0\mapsto h_0, g_{t_0}\mapsto g_0$ induces a bijection between the two index sets: $\{(h_0, g_{t_0}): h_0\in H, g_{t_0}\in c'(h_0, Y)\}$ and $\{(g_0, h_0): g_0\in G, h_0\in c(g_0, X)\}$. This explains the change of the index sets under the two rightmost sums. So under the above bijection, $g_{t_0}^{-1}X_{g_{t_0}, h_0}$ is replaced by $g_0^{-1}X_{g_0, h_0}$. 

Now, recall $x\in g_0^{-1}X_{g_0, h_0}$ iff $g_0x\in X_{g_0, h_0}$ iff $c(g_0, x)=h_0$. 
Therefore, for each $g_0\in G$, $X=\sqcup_{h_0\in c(g_0, X)}g_0^{-1}X_{g_0, h_0}$.

Using this fact, we can continue to simplify (\ref{eq for thm 1.1: check (a) in general case}) to the following expressions.
\begin{align*}
&\qquad\mbox{The top expression in (a)}\\
&= \sum_{t_1,\ldots,t_{n-1}}\sum_{g_0\in G}\big \langle g_0^{-1}\theta(g_0,\ldots, g_{t_{n-1}}), \pi(\xi)|_{(\sqcup_{h_0\in c(g_0, X)}g_0^{-1}X_{g_0, h_0})\cap [t_1,\ldots, t_{n-1}]} \big\rangle\\
&=\sum_{t_1,\ldots,t_{n-1}}\sum_{g_0\in G}\big \langle g_0^{-1}\theta(g_0,\ldots, g_{t_{n-1}}), \pi(\xi)|_{X\cap [t_1,\ldots, t_{n-1}]} \big\rangle\\
&= \sum_{t_1,\ldots,t_{n-1}}\sum_{g_0\in G}\big \langle g_0^{-1}\theta(g_0,\ldots, g_{t_{n-1}}), \pi(\xi)|_{ [t_1,\ldots, t_{n-1}]} \big\rangle\\
&=\quad\mbox{The bottom expression in (a)}.\qedhere
\end{align*}
\end{proof}

\begin{proof}[Proof of (b)]

The proof relies on the following two facts:\\

(Fact 1) For every fixed $t_1,\ldots, t_{i-1}, t_{i+1},\ldots, t_{n-1}$, the map $t_j\mapsto t_j$ ($\forall j\neq i$), $s\mapsto g$, $k\mapsto \bar{g}$ induces a bijection between the index sets:

%$\{(t_1,\ldots, t_{i-1}, t_{i+1},\ldots, t_{n-1}, s, k): h\bar{h}=h_i, [t_1,\ldots, t_{i-1}, s, k, t_{i+1}, \ldots, t_{n-1}]\neq \emptyset, g_s\in c'(h, Y), g_k\in c'(\bar{h}, Y) \}$ and $\{(t_1,\ldots, t_{n-1}, g, \bar{g}): h\in c(g, X), \bar{h}\in c(\bar{g}, X), g\bar{g}=g_{t_i}, h\bar{h}=h_i, g_{t_i}\in c'(h_i, Y), [t_1,\ldots, t_{n-1}]\neq \emptyset\}.$

\begin{align*}
\left\{  (t_1,\ldots, t_{i-1}, t_{i+1},\ldots, t_{n-1}, s, k) ~   
\begin{tabular}{|l}
$h\bar{h}=h_i$\\            
$g_s\in c'(h, Y), g_k\in c'(\bar{h}, Y)$\\
$[t_1,\ldots, t_{i-1}, s, k, t_{i+1}, \ldots, t_{n-1}]\neq \emptyset$
\end{tabular}
\right\}
\end{align*}
and 
\begin{align*}
\left\{  
(t_1,\ldots, t_{n-1}, g, \bar{g})~
\begin{tabular}{|l}
$g\bar{g}=g_{t_i}, h\bar{h}=h_i$\\
$ h\in c(g, X), \bar{h}\in c(\bar{g}, X), g_{t_i}\in c'(h_i, Y)$\\
$[t_1,\ldots, t_{i-1}, g,\bar{g}, t_{i+1},\ldots, t_{n-1}]\neq \emptyset$
\end{tabular}
\right\}.
\end{align*}

Indeed, just remember that the index $s$ stands for $g_s$, $k$ stands for $g_k$, $t_j$ stands for $g_{t_j}$ and $g_{t_i}$ is determined by $g$ and $\bar{g}$.  \\

Here, $[t_1,\ldots, t_{i-1}, g,\bar{g},t_{i+1},\ldots, t_{n-1}]$ is the image of $[t_1,\ldots, t_{i-1}, s, k, t_{i+1}, \ldots, t_{n-1}]$ under the bijection as in Fact 1. More precisely, from the def. (\ref{eq for thm 1.1: def of [t1,...s,k,...]}), we know

\begin{multline}\label{eq for thm 1.1: def of [t1,...g, g bar, ...]}
 [t_1,\ldots, t_{i-1}, g,\bar{g},t_{i+1},\ldots, t_{n-1}]=
 X_{t_1}\cap g_{t_1}X_{t_2}\cap\cdots\cap (g_{t_1}\cdots g_{t_{i-2}})X_{t_{i-1}}\\
\cap (g_{t_1}\cdots g_{t_{i-1}})X_g\cap (g_{t_1}\cdots g_{t_{i-1}}g)X_{\bar{g}}\cap(g_{t_{1}}\cdots g_{t_{i-1}}g\bar{g})X_{t_{i+1}}\\
\cap(g_{t_{1}}\cdots g_{t_{i-1}}g\bar{g}g_{t_{i+1}})X_{t_{i+2}} \cap \cdots\cap(g_{t_{1}}\cdots g_{t_{i-1}}g\bar{g}g_{t_{i+1}}\cdots g_{t_{n-2}})X_{t_{n-1}}.
\end{multline}

We remind the reader in the above expression, $X_g=X_{g, h}=\{x: c(g^{-1}, x)=h^{-1}\}$ and $X_{\bar{g}}=\{x: c(\bar{g}^{-1}, x)=\bar{h}^{-1}\}$.\\

(Fact 2) For all $g, \bar{g}\in G$ satisfying $g\bar{g}=g_{t_i}$, we have \begin{align*}
&\bigsqcup_{\substack{h,\bar{h}, h\bar{h}=h_i\\h\in c(g, X), \bar{h}\in c(\bar{g}, X)}}(g_{t_1}\cdots g_{t_{i-1}})X_{g, h}\cap (g_{t_1}\cdots g_{t_{i-1}}g)X_{\bar{g},\bar{h}}\cap (g_{t_1}\cdots g_{t_{i-1}}g\bar{g})X_{t_{i+1}}\\
&\quad{}=(g_{t_1}\cdots g_{t_{i-1}})X_{t_i}\cap (g_{t_1}\cdots g_{t_i})X_{t_{i+1}}. 
\end{align*}

Recall $X_{t_{i+1}}=X_{g_{t_{i+1}}, h_{i+1}}=\{x: c(g_{t_{i+1}}^{-1}, x)=h_{i+1}^{-1}\}$, $X_{g, h}=\{x: c(g^{-1}, x)=h^{-1}\}$ and $X_{\bar{g}, \bar{h}}=\{x: c(\bar{g}^{-1}, x)=\bar{h}^{-1}\}$.
Then Fact 2 is easy to verify using cocycle identities.

Now, we can prove (b) as follows.

By Fact 1,
\begin{align}\label{eq for thm 1.1: top line in (b)}
\begin{split}
&\quad \mbox{The top expression in (b)}\\
&=\sum_{t_1,\ldots, t_{n-1}}\sum_{\substack{g, \bar{g}\in G\\g\bar{g}=g_{t_i}}}\sum_{\substack{h,\bar{h}, h\bar{h}=h_i\\h\in c(g, X), \bar{h}\in c(\bar{g}, X)}}\big\langle \theta(g_{t_1},\ldots, g_{t_{i-1}}, g, \bar{g}, g_{t_{i+1}},\ldots, g_{t_{n-1}}), \\
& \qquad\qquad\qquad\qquad\qquad\qquad\qquad\pi(\xi)|_{[t_1,\ldots, t_{i-1}, g,\bar{g},t_{i+1},\ldots, t_{n-1}]}\big\rangle.
\end{split}
\end{align}
 Since $g\bar{g}=g_{t_i}$, we can rewrite
(\ref{eq for thm 1.1: def of [t1,...g, g bar, ...]}) as 
\begin{multline}\label{eq for thm 1.1: final form for [t1,...,g, g bar,...]}
 [t_1,\ldots, t_{i-1}, g,\bar{g},t_{i+1},\ldots, t_{n-1}]=
 X_{t_1}\cap g_{t_1}X_{t_2}\cap\cdots\cap (g_{t_1}\cdots g_{t_{i-2}})X_{t_{i-1}}\\
\cap \underline{(g_{t_1}\cdots g_{t_{i-1}})X_{g, h}\cap (g_{t_1}\cdots g_{t_{i-1}}g)X_{\bar{g}, \bar{h}}\cap(g_{t_{1}}\cdots g_{t_{i-1}}g\bar{g})X_{t_{i+1}}}\\
\cap(g_{t_{1}}\cdots g_{t_{i-1}}g_{t_i}g_{t_{i+1}})X_{t_{i+2}} \cap \cdots\cap(g_{t_{1}}\cdots g_{t_{i-1}}g_{t_i}g_{t_{i+1}}\cdots g_{t_{n-2}})X_{t_{n-1}}.
\end{multline}
Now, observe that the underlined part above is exactly the one appeared in Fact 2, and the rest pieces in $(\ref{eq for thm 1.1: final form for [t1,...,g, g bar,...]})$ are exactly the ones appeared in the definition of $[t_1,\ldots,t_{n-1}]$, which is recalled below
\begin{multline*}
[t_1,\ldots, t_{n-1}]=
 X_{t_1}\cap g_{t_1}X_{t_2}\cap\cdots\cap (g_{t_1}\cdots g_{t_{i-2}})X_{t_{i-1}}\\
\cap \underline{(g_{t_1}\cdots g_{t_{i-1}})X_{t_i}\cap (g_{t_1}\cdots g_{t_{i}})X_{t_{i+1}}}\\
\cap(g_{t_{1}}\cdots g_{t_{i-1}}g_{t_i}g_{t_{i+1}})X_{t_{i+2}} \cap \cdots\cap(g_{t_{1}}\cdots g_{t_{i-1}}g_{t_i}g_{t_{i+1}}\cdots g_{t_{n-2}})X_{t_{n-1}}.
\end{multline*}

Apply the above observation, we can continue the computation in (\ref{eq for thm 1.1: top line in (b)}) as follows.

First, we can move \[\sum_{\substack{h,\bar{h}, h\bar{h}=h_i\\h\in c(g, X), \bar{h}\in c(\bar{g}, X)}}\] inside $\langle -,- \rangle$ and put it before the 2nd entry, i.e. $\pi(\xi)|_{[t_1,\ldots, t_{i-1}, g,\bar{g},t_{i+1},\ldots, t_{n-1}]}$, to get

\begin{align*}
\sum_{\substack{h,\bar{h}, h\bar{h}=h_i\\h\in c(g, X), \bar{h}\in c(\bar{g}, X)}}\pi(\xi)|_{[t_1,\ldots, t_{i-1}, g,\bar{g},t_{i+1},\ldots, t_{n-1}]}=\pi(\xi)|_{\mathcal{J}}.
\end{align*}
Here, 
\begin{align*}
\mathcal{J}&:=\bigsqcup_{\substack{h,\bar{h}, h\bar{h}=h_i\\h\in c(g, X), \bar{h}\in c(\bar{g}, X)}}[t_1,\ldots, t_{i-1}, g,\bar{g},t_{i+1},\ldots, t_{n-1}]\\
&=[t_1,\ldots,t_{n-1}].\qquad(\mbox{by Fact ~2~+~(\ref{eq for thm 1.1: final form for [t1,...,g, g bar,...]})})
\end{align*}
Put all the above information together, we finally arrive at the following.
\begin{align*}
&\quad\mbox{The top expression in (b)}\\
&=\sum_{t_1,\ldots, t_{n-1}}\sum_{\substack{g, \bar{g}\in G\\g\bar{g}=g_{t_i}}}\big\langle \theta(g_{t_1},\ldots, g_{t_{i-1}}, g, \bar{g}, g_{t_{i+1}},\ldots, g_{t_{n-1}}), \pi(\xi)|_{[t_1,\ldots, t_{n-1}]}\big\rangle\\
&=\mbox{The bottom expression in (b)}.
\end{align*}
This finishes the proof of (b).
\end{proof}

\begin{proof}[Proof of (c)]

\begin{align*}
&\quad\mbox{The top expression in (c)}\\
&= \sum_{t_1,\ldots, t_{n-1}}\sum_{h_n\in H}\sum_{t_n}\big\langle\theta(g_{t_1},\ldots, g_{t_n}), \pi(\xi)|_{[t_1,\ldots, t_n]} \big\rangle\\
&=\sum_{t_1,\ldots, t_{n-1}}\sum_{h_n\in H}\sum_{t_n}\big\langle\theta(g_{t_1},\ldots, g_{t_n}), \pi(\xi)|_{[t_1,\ldots, t_{n-1}]\cap (g_{t_1}\cdots g_{t_{n-1}})X_{t_n}} \big\rangle\\
&\qquad\mbox{(def. of $[t_1,\ldots, t_n]$)}\\
&=\sum_{t_1,\ldots, t_{n-1}}\sum_{g_n\in G}\sum_{h_n\in c(g_n, X)}\big\langle\theta(g_{t_1},\ldots ,g_{t_{n-1}}, g_n), \pi(\xi)|_{[t_1,\ldots, t_{n-1}]\cap (g_{t_1}\cdots g_{t_{n-1}})X_{g_n, h_n}} \big\rangle.
\end{align*}
The last equality holds for the following reason.

Recall here $X_{t_n}=X_{g_{t_n}, h_n}=\{x: c(g_{t_n}^{-1}, x)=h_n^{-1}\}$, so $X_{t_n}$ is replaced by $X_{g_n, h_n}$ under the bijective map $t_n\mapsto g_n$, $h_n\mapsto h_n$ between the index sets \[\{(h_n, t_n): t_n\in c'(h_n, Y), h_n\in H\}\] and \[\{(g_n, h_n): h_n\in c(g_n, X), g_n\in G\}.\]

Then, since \[X=\bigsqcup_{h_n\in c(g_n, X)}(g_{t_1}\cdots g_{t_{n-1}})X_{g_n, h_n},\]
we can continue the computation of the above expression as follows.
\begin{align*}
&\quad\mbox{The top expression in (c)}\\
&=\sum_{t_1,\ldots, t_{n-1}}\sum_{g_n\in G}\big\langle\theta(g_{t_1},\ldots, g_{t_{n-1}}, g_n), \pi(\xi)|_{[t_1,\ldots, t_{n-1}]\cap \sqcup_{h_n\in c(g_n, X)}(g_{t_1}\cdots g_{t_{n-1}})X_{g_n, h_n}} \big\rangle\\
&=\sum_{t_1,\ldots, t_{n-1}}\sum_{g_n\in G}\big\langle\theta(g_{t_1},\ldots, g_{t_{n-1}}, g_n), \pi(\xi)|_{[t_1,\ldots, t_{n-1}]}\big\rangle\\
&=\mbox{The bottom expression in (c).}\qedhere
\end{align*} 
\end{proof}

\textbf{Step 2}: $S_n$ is a bijection.\\

By symmetry, we define a (linear) map $T_n: C_f(H^n, N_0(H, Y)^*)\to C_f(G^n, N_0(G, X)^*)$ by setting $T_n(\theta')=\theta''$, where 

\begin{align}\label{eq for thm 1.1: def of T_n}
\theta''(g_0,\ldots, g_{n-1})(\eta):=\sum_{s_0,\ldots, s_{n-1}}\theta'(h_{s_0},\ldots, h_{s_{n-1}})(L(\eta)|_{[s_0,\ldots, s_{n-1}]}).
\end{align}
Here, $\eta\in N_0(G, X)$, $Y_{s_i}:=\{y\in Y: ~ c'(h_{s_i}^{-1}, y)=g_i^{-1}\}$ and 
\begin{align}\label{def for thm 1.1: def of [s_0,..., s_n-1]}
[s_0,\ldots, s_{n-1}]:=Y_{s_0}\cap h_{s_0}Y_{s_1}\cap\cdots \cap (h_{s_0}\cdots h_{s_{n-2}})Y_{s_{n-1}}.
\end{align}

If we can prove the following claim, then by symmetry, we also have $S_nT_n=id$, and hence $S_n$ is a bijection.\\

\textbf{Claim: $T_nS_n=id$.}
\begin{proof}[Proof of the claim]

Take any $\theta\in C_f(G^n, N_0(G, X)^*)$, $(g_0,\ldots, g_{n-1})\in G^n$ and any $\eta\in N_0(G, X)$, let $\theta'=S_n(\theta)$ and $\theta''=T_n(\theta')$. 

We aim to show $\theta=\theta''$, i.e. $\theta''(g_0,\ldots, g_{n-1})(\eta)=\theta(g_0,\ldots, g_{n-1})(\eta)$.

From the definitions of $S_n$ and $T_n$, i.e. (\ref{eq for thm 1.1: def of S_n}) and (\ref{eq for thm 1.1: def of T_n}), we deduce that
\begin{align}\label{eq for thm 1.1: expression for theta''}
\begin{split}
&\quad\theta''(g_0,\ldots, g_{n-1})(\eta)\\
&=\sum_{s_0,\ldots, s_{n-1}}\theta'(h_{s_0},\ldots, h_{s_{n-1}})(L(\eta)|_{[s_0,\ldots, s_{n-1}]})\\
&=\sum_{s_0,\ldots, s_{n-1}}\sum_{t_0,\ldots, t_{n-1}}\theta(g_{t_0},\ldots, g_{t_{n-1}})(\pi(L(\eta)|_{[s_0,\ldots, s_{n-1}]})|_{[t_0,\ldots, t_{n-1}]}).
\end{split}
\end{align}
Since there are many variables here, let us recall some notations used above.

$[s_0,\ldots, s_{n-1}]$ is defined as in (\ref{def for thm 1.1: def of [s_0,..., s_n-1]}), and  def. (\ref{eq for lem 1.1: def of [t0,...,tn]}) in our context is the following.
\begin{align}\label{eq for thm 1.1: def of [t_0,..,t_n-1] when checking 1-1}
[t_0, \ldots, t_{n-1}]=X_{g_{t_0}, h_{s_0}}\cap g_{t_0}X_{g_{t_1}, h_{s_1}}\cap\cdots\cap (g_{t_0}\ldots g_{t_{n-2}})X_{g_{t_{n-1}}h_{s_{n-1}}},
\end{align}
where $X_{g_{t_i}, h_{s_i}}=\{x: c(g_{t_i}^{-1}, x)=h_{s_i}^{-1}\}.$\\

To continue the computation, we need the following facts.\\

(Fact 3) $\pi(L(\eta)|_{[s_0,\ldots, s_{n-1}]})|_{[t_0,\ldots, t_{n-1}]}=\eta|_{[t_0,\ldots, t_{n-1}]\cap \psi([s_0,\ldots, s_{n-1}])}.$

This is clear by Lemma \ref{lem: composition of pi and L under restrictions}.

(Fact 4) $[t_0,\ldots, t_{n-1}]\cap \psi([s_0,\ldots, s_{n-1}])=\emptyset$ unless $g_{t_i}=g_i$ for all $0\leq i\leq n-1$. When these conditions hold, the intersection equals 
\[X_{g_0, s_0}\cap g_0X_{g_1, s_1}\cap\cdots\cap (g_0\cdots g_{n-2})X_{g_{n-1}, s_{n-1}},\] where $X_{g_i, s_i}=\{x: c(g_i^{-1}, x)=h_{s_i}^{-1}\}$.

This fact can be checked using (\ref{def for thm 1.1: def of [s_0,..., s_n-1]}), (\ref{eq for thm 1.1: def of [t_0,..,t_n-1] when checking 1-1}) and Lemma \ref{lemma: xin's lemma}.\\

Now, we can continue the computation of (\ref{eq for thm 1.1: expression for theta''}) as follows.
\begin{align*}
&\quad\theta''(g_0,\ldots, g_{n-1})(\eta)\\
&\overset{Fact~3}{=}\sum_{s_0,\ldots, s_{n-1}}\sum_{t_0,\ldots, t_{n-1}}\theta(g_{t_0},\ldots, g_{t_{n-1}})(\eta|_{[t_0,\ldots, t_{n-1}]\cap \psi([s_0,\ldots, s_{n-1}])})\\
&\overset{Fact~4}{=}\sum_{s_0,\ldots, s_{n-1}}\theta(g_0,\ldots, g_{n-1})(\eta|_{X_{g_0, s_0}\cap g_0X_{g_1, s_1}\cap\cdots\cap (g_0\cdots g_{n-2})X_{g_{n-1}, s_{n-1}}})\\
&=\theta(g_0,\ldots, g_{n-1})(\eta|_{\bigsqcup_{s_0,\ldots, s_{n-1}}(X_{g_0, s_0}\cap g_0X_{g_1, s_1}\cap\cdots\cap (g_0\cdots g_{n-2})X_{g_{n-1}, s_{n-1}})})\\
&=\theta(g_0,\ldots, g_{n-1})(\eta).
\end{align*}
The last equality holds as 
\[X=\bigsqcup_{s_0,\ldots, s_{n-1}}(X_{g_0, s_0}\cap g_0X_{g_1, s_1}\cap\cdots\cap (g_0\cdots g_{n-2})X_{g_{n-1}, s_{n-1}}),\]
which can be checked directly.
\end{proof}

\section{Proof of Theorem \ref{thm on b cohomology}}\label{section: proof of b-cohomology case}

The idea for the proof is similar to the previous one: For each $n$, we will construct a (linear) map $S^n$ from $C_b(H^n, N_0(H, Y)^{**})$ to $C_b(G^n, N_0(G, X)^{**})$ and check directly $S^n$ is a cochain map, i.e. $\partial^{n-1}S^{n-1}=S^n\partial^{n-1}$ holds. Then, we use symmetry to find the inverse of $S^n$. Since the proof of general case uses cumbersome notations, we include the proof of the initial cases ($n=1, 2, 3$) to illustrate the main ideas.

\subsection{Case $n=1$}
Recall that $C_b(H^0, N_0(H, Y)^{**})=N_0(H, Y)^{**}$. We define $S^0=\pi^{**}$ and $S^1(f)=f'$, where 
\[f'(g)(\tau):=\sum_i\langle \pi^{**}(f(h_i)),\tau_i \rangle.\]
Here, $f\in  C_b(H, N_0(H, Y)^{**})$, $\tau\in N_0(G, X)^*$ and $\tau_i=\tau|_{X_i}$. Moreover, we write $X_i=\{x\in X:~ c(g^{-1}, x)=h_i^{-1}\}$, where $c(g, X)=\{h_i|~i\}$. \\

\textbf{Step 1: check $S^1$ is well-defined, i.e. $\sup_{g\in G}||f'(g)||<\infty$.}\\

Since $\langle \pi^{**}(f(h_i)), \tau_i \rangle=f(h_i)(\tau_i\pi)$, it suffices to show that $\sum_i||\tau_i||/||\tau||$ is bounded (and independent of the choice of $g$, $\{X_i\}$ and $n:=\#c(g, X)$). This is done by the following lemma.

\begin{lem}\label{lem: norm estimate for state restriction}
Let $\tau\in N_0(G, X)^*$ and $X=\sqcup_iX_i$ be a finite clopen partition. Then $\sum_i||\tau_i||\leq ||\tau||$.
\end{lem}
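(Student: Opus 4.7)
The plan is to prove the inequality by a gluing argument that exploits the disjointness of the partition $X = \sqcup_i X_i$. Given $\epsilon > 0$, for each $i$ I would first pick a witness $\xi^{(i)} \in N_0(G, X)$ with $\|\xi^{(i)}\| \le 1$ and $|\tau_i(\xi^{(i)})| = |\tau(\xi^{(i)}|_{X_i})| > \|\tau_i\| - \epsilon/n$, where $n$ is the (finite) number of pieces in the partition. Since the paper declares all Banach spaces to be real, I can replace $\xi^{(i)}$ by $-\xi^{(i)}$ if necessary so that $\tau(\xi^{(i)}|_{X_i}) \ge 0$ for every $i$.

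Then I would form the single gluing
\[
\eta := \sum_i \xi^{(i)}|_{X_i} \in N_0(G, X).
\]
This lies in $N_0(G, X)$ because each $X_i$ is clopen (so each $\xi^{(i)}|_{X_i}$ is continuous and vanishes outside $X_i$), and because the summation condition defining $\ell^1_0(G)$ is preserved pointwise. The central estimate is the norm bound
\[
\|\eta\| = \sup_{x \in X} \sum_{g \in G} |\eta_x(g)| = \max_i \sup_{x \in X_i} \sum_g |\xi^{(i)}_x(g)| \le \max_i \|\xi^{(i)}\| \le 1,
\]
where the sup becomes a max-over-$i$ rather than a sum precisely because the $X_i$ partition $X$ disjointly, so at any given $x$ only one summand is nonzero.

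With this in hand, evaluating $\tau$ gives
\[
\tau(\eta) = \sum_i \tau(\xi^{(i)}|_{X_i}) > \sum_i \|\tau_i\| - \epsilon,
\]
so $\|\tau\| \ge \tau(\eta) > \sum_i \|\tau_i\| - \epsilon$, and letting $\epsilon \to 0$ concludes the proof.

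I do not anticipate a genuine obstacle; the only thing one has to be careful about is that the norm on $N_0(G,X) = C(X, \ell^1_0(G))$ is the \emph{sup}-$\ell^1$ norm rather than an integrated norm, which is exactly what makes the disjoint-gluing argument work. If this were an $L^1$-type norm, one would instead get $\|\eta\| = \sum_i \|\xi^{(i)}|_{X_i}\|$ and the inequality would go the wrong way; it is precisely the supremum structure, combined with the disjointness of the clopen pieces, that yields the desired bound.
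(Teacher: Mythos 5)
Your proof is correct and follows the same strategy as the paper's: choose near-optimal witnesses for each $\|\tau_i\|$, adjust signs (you negate $\xi^{(i)}$, the paper multiplies by $\lambda_i \in \{\pm 1\}$, which is the same move), glue the restrictions over the disjoint clopen pieces, and use the sup-$\ell^1$ norm to see that the glued element has norm at most $1$. Your exposition of why the sup becomes a max over $i$ is a bit more explicit than the paper's, but the argument is the same.
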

\begin{proof}
For any $\epsilon>0$, take $a_i\in N_0(G, X)$ such that $||a_i||=1$ and $||\tau_i||\leq |\tau_i(a_i)|+\epsilon/n$ for all $i$.

Then $\sum_i||\tau_i||\leq \sum_i|\tau_i(a_i)|+\epsilon=\sum_i\tau(a_i|_{X_i}\cdot \lambda_i)+\epsilon=\tau(\sum_i(a_i|_{X_i}\cdot\lambda_i))+\epsilon$, where $\lambda_i\in \{\pm 1\}$ is the sign of $\tau_i(a_i)$.

Observe that $\sum_i(a_i|_{X_i}\cdot\lambda_i)\in N_0(G, X)$ and $||\sum_i(a_i|_{X_i}\cdot\lambda_i)||\leq 1$. Hence, $\sum_i||\tau_i||\leq ||\tau||+\epsilon$ for all $\epsilon>0$. So, $\sum_i||\tau_i||\leq ||\tau||$.
\end{proof}

\textbf{Step 2: check $\partial^0S^0=S^1\partial^0$.}\\

Take any $\xi\in N_0(H, Y)^{**}$, $g\in G$ and $\tau\in N_0(G, X)^*$. We do computation as follows.

\begin{align*}
(S^1\partial^0\xi)(g)(\tau)&=\sum_i\langle \pi^{**}((\partial^0\xi)(h_i)), \tau_i \rangle\qquad(\mbox{def. of $S^1$})\\
&=\sum_i\langle (\partial^0\xi)(h_i), \tau_i\pi \rangle\qquad(\mbox{def. of $\pi^{**}$})\\
&=\sum_i\langle h_i\xi, \tau_i\pi \rangle\qquad(\mbox{def. of $\partial^0$})\\
&=\sum_i\langle \xi, h_i^{-1}(\tau_i\pi) \rangle.\qquad(\mbox{def. of dual action})\\
&\\
(\partial^0S^0\xi)(g)(\tau)&=[g(S^0\xi)](\tau)\qquad(\mbox{def. of $\partial^0$})\\
&=(S^0\xi)(g^{-1}\tau)\qquad(\mbox{def. of dual action})\\
&=\langle \pi^{**}\xi , g^{-1}\tau\rangle\qquad(\mbox{$S^0=\pi^{**}$})\\
&=\langle \xi, (g^{-1}\tau)\pi \rangle\qquad(\mbox{def. of $\pi^{**}$})\\
&=\sum_i\langle \xi, (g^{-1}\tau_i)\pi \rangle. \qquad(\mbox{$\tau=\sum_i\tau_i$})
\end{align*}
Therefore, to show the above two equations are equal, it suffices to check that for each $i$, we have $h_i^{-1}(\tau_i\pi)=(g^{-1}\tau_i)\pi$. Equivalently, we need to check that $\tau(\pi(h_i\xi)|_{X_i})=\tau([g(\pi(\xi))]|_{X_i})$ for each $\xi\in N_0(H, Y)$, which is clear by Lemma \ref{lem: module structure under restriction}.

Clearly, $S^0$ is a bijection by Lemma \ref{lemma: key lemma}.
\subsection{Case $n=2$}

Now, we construct a (linear) map $S^2: C_b(H^2, N_0(H, Y)^{**})\to C_b(G^2, N_0(G, X)^{**})$.

Write 
\begin{align}\label{def of X_ijksl for thm 1.2 n=0,1,2}
\begin{split}
X_i&=\{x: c(g_0^{-1}, x)=h_i^{-1}\},\\
X_j&=\{x: c(g_1^{-1}, x)=h_j^{-1}\},\\
X_k&=\{x: c((g_0g_1)^{-1}, x)=h_k^{-1}\},\\
X_s&=\{x: c((g_1g_2)^{-1}, x)=h_s^{-1}\},\\
X_l&=\{x: c(g_2^{-1}, x)=h_l^{-1}\}.
\end{split}
\end{align}

Given $f\in C_b(H^2, N_0(H, Y)^{**})$, we define $S^2(f)=f'$, where for each $\tau\in N_0(G, X)^*$,
\[f'(g_0, g_1)(\tau):=\sum_{i, j}\langle f(h_i, h_j), \tau|_{g_0X_j\cap X_i}\pi\rangle.\]

\textbf{Step 1: $S^2$ is well-defined.}\\

To show $\sup_{(g_0, g_1)\in G^2}||f'(g_0, g_1)||<\infty$, apply Lemma \ref{lem: norm estimate for state restriction} to $X=\sqcup_{i, j}g_0X_j\cap X_i$.\\

\textbf{Step 2: check $\partial^1S^1=S^2\partial^1$.}\\

Take any $f\in C_b(H, N_0(H, Y)^{**})$, write $f'=S^1f$.
For any $g_1, g_2\in G$ and $\tau\in N_0(G, X)^*$, we have

\begin{align*}
&\quad(S^2\partial^1f)(g_1, g_2)(\tau)\\
&=\sum_{j,l}\langle (\partial^1f)(h_j, h_l), \tau|_{g_1X_l\cap X_j}\pi \rangle\qquad(\mbox{def. of $S^2$})\\
&=\sum_{j,l}\langle h_jf(h_l)-f(h_jh_l)+f(h_j), \tau|_{g_1X_l\cap X_j}\pi \rangle\qquad(\mbox{def. of $\partial^1$})\\
&=\sum_{j,l}\langle h_jf(h_l),\tau|_{g_1X_l\cap X_j}\pi \rangle-\sum_{j,l}\langle f(h_jh_l),\tau|_{g_1X_l\cap X_j} \pi\rangle+\sum_{j, l}\langle f(h_j),\tau|_{g_1X_l\cap X_j}\pi \rangle.\\
&\\
&\quad(\partial^1S^1f)(g_1, g_2)(\tau)\\
&=(g_1f'(g_2)-f'(g_1g_2)+f'(g_1))(\tau)\qquad(\mbox{def. of $\partial^1$})\\
&=\sum_l\langle \pi^{**}(f(h_l)),(g_1^{-1}\tau)_l \rangle-\sum_s\langle \pi^{**}(f(h_s)), \tau_s \rangle+\sum_j\langle \pi^{**}(f(h_j)),\tau_j \rangle.\\
\end{align*}
By comparing the two expressions above, it suffices to show each corresponding terms are equal, i.e. 
\begin{align}
\sum_{j,l}\langle h_jf(h_l),\tau|_{g_1X_l\cap X_j}\pi \rangle&=\sum_l\langle \pi^{**}(f(h_l)),(g_1^{-1}\tau)_l \rangle,\label{eq 1 for n=1, thm 1.2}\\
\sum_{j,l}\langle f(h_jh_l),\tau|_{g_1X_l\cap X_j}\pi \rangle&=\sum_s\langle \pi^{**}(f(h_s)), \tau_s \rangle,\label{eq 2 for n=1, thm 1.2}\\
\sum_{j, l}\langle f(h_j),\tau|_{g_1X_l\cap X_j}\pi \rangle&=\sum_j\langle \pi^{**}(f(h_j)),\tau_j \rangle.\label{eq 3 for n=1, thm 1.2}
\end{align}

To check (\ref{eq 1 for n=1, thm 1.2}), it suffices to show for each $l$, 
\[\sum_j\langle h_jf(h_l),\tau|_{g_1X_l\cap X_j}\pi \rangle=\langle \pi^{**}(f(h_l)),(g_1^{-1}\tau)_l \rangle.\] Equivalently, we check $\sum_jh_j^{-1}(\tau|_{g_1X_l\cap X_j}\pi)=(g_1^{-1}\tau)_l\pi$ holds.

Take any $\xi\in N_0(H, Y)$, observe that
\begin{align*}
\sum_jh_j^{-1}(\tau|_{g_1X_l\cap X_j}\pi)(\xi)&=\sum_j(\tau|_{g_1X_l\cap X_j}\pi)(h_j\xi)\qquad(\mbox{def. of dual action})\\
&=\sum_j\tau(\pi(h_j\xi)|_{g_1X_l\cap X_j})\\
&=\sum_j\tau((g_1\pi(\xi))|_{g_1X_l\cap X_j}))\qquad(\mbox{Lemma \ref{lem: module structure under restriction}})\\
&=\tau((g_1\pi(\xi))|_{\sqcup_jg_1X_l\cap X_j}))\\
&=\tau((g_1\pi(\xi))|_{g_1X_l})\qquad(\mbox{$X=\sqcup_jX_j$})\\
&=\tau(g_1(\pi(\xi)|_{X_l})) \\
&=(g_1^{-1}\tau)_l\pi(\xi).
\end{align*}
Hence, $\sum_jh_j^{-1}(\tau|_{g_1X_l\cap X_j}\pi)=(g_1^{-1}\tau)_l\pi$ holds.

To check (\ref{eq 2 for n=1, thm 1.2}), observe that by the cocycle identity, $X_s=\sqcup_{(j, l)\in I_s}(g_1X_l\cap X_j)$ holds, where $I_s:=\{(j, l): h_s=h_jh_l\}$.
\begin{align*}
&\quad\mbox{LHS of (\ref{eq 2 for n=1, thm 1.2})}\\
&=\sum_s\sum_{(j, l)\in I_s}\langle f(h_jh_l),\tau|_{g_1X_l\cap X_j} \pi\rangle~(\mbox{as $g_1X_l\cap X_j\neq \emptyset$ only if $(j, l)\in I_s$ for some $s$})\\
&=\sum_s\sum_{(j, l)\in I_s}\langle f(h_s), \tau|_{g_1X_l\cap X_j}\pi \rangle\\
&=\sum_s\langle f(h_s),\tau|_{\sqcup_{(j, l)\in I_s}g_1X_l\cap X_j} \pi\rangle\\
&=\sum_s\langle f(h_s), \tau|_{X_s} \rangle~(\mbox{as $X_s=\sqcup_{(j,l)\in I_s}(g_1X_l\cap X_j)$})\\
&=\sum_s\langle f(h_s), \tau_s\pi \rangle=\mbox{RHS of (\ref{eq 2 for n=1, thm 1.2})}.
\end{align*}

To check (\ref{eq 3 for n=1, thm 1.2}), it suffices to show for each $j$, $\sum_l\langle f(h_j),\tau|_{g_1X_l\cap X_j}\pi \rangle=\langle \pi^{**}(f(h_j)), \tau_j\rangle$. Observe that
\begin{align*}
\sum_l\langle f(h_j),\tau|_{g_1X_l\cap X_j}\pi \rangle=\langle f(h_j),\tau|_{\sqcup_l g_1X_l\cap X_j}\pi \rangle=\langle f(h_j), \tau|_{X_j}\pi \rangle=\langle \pi^{**}(f(h_j)), \tau_j\rangle.
\end{align*}

\textbf{Step 3: check that $S^1$ is a bijection.} \\

For this purpose, we define a map $T^1: C_b(G, N_0(G, X)^{**})\to C_b(H, N_0(H, Y)^{**})$ by setting $T^1(z)=z'$, where $z'(h)(\nu):=\sum_{i}\langle L^{**}(z(g_i)), \nu_i \rangle$. Here $\nu\in  N_0(H, Y)^*$, $\nu_i=\nu|_{Y_i}, Y_i=\{y: c'(h^{-1}, y)=g_i^{-1}\}$.

Now, we check $T^1S^1=id$ holds. $S^1T^1=id$ can be checked similarly by symmetry.

Take any $f\in C_b(H, N_0(H, Y)^{**})$, $h\in H$ and $\nu\in N_0(H, Y)^*$, write $f'=S^1f$.
\begin{align*}
(T^1S^1f)(h)(\nu)&=\sum_i\langle L^{**}(f'(g_i)), \nu_i\rangle  =\sum_i\langle f'(g_i), \nu_iL \rangle\\
&=\sum_i\sum_j \langle \pi^{**}(f(h_j)), (\nu_iL)_j \rangle\\
&\quad\mbox{(Here, $(\nu_iL)_j:=(\nu_iL)|_{X_{g_i, h_j}}$, $X_{g_i, h_j}=\{x: c(g_i^{-1}, x)=h_j^{-1}\}.$)}\\
&=\sum_i\sum_j \langle f(h_j), (\nu_iL)_j\pi\rangle.
\end{align*}
Now, we claim that $(\nu_iL)_j\pi=\nu|_{\phi(X_{g_i, h_j})\cap Y_i}$.
To see this, take any $\xi\in N_0(H, Y)$, then 
\begin{align*}
(\nu_iL)_j\pi(\xi)&=(\nu_iL)(\pi(\xi)|_{X_{g_i, h_j}})=\nu([L(\pi(\xi)|_{X_{g_i, h_j}})]|_{Y_i})\\
&=\nu(\xi|_{\phi(X_{g_i, h_j})\cap Y_i})\qquad(\mbox{Lemma \ref{lem: composition of pi and L under restrictions}})\\
&=(\nu|_{\phi(X_{g_i, h_j})\cap Y_i})(\xi). 
\end{align*}

Note that $\phi(X_{g_i, h_j})\cap Y_i=Y_i$ if $h_j=h$ and $\emptyset$ otherwise. Therefore,
\[(T^1S^1f)(h)(\nu)=\sum_i\sum_j \langle f(h_j), (\nu_iL)_j\pi\rangle=\sum_i\langle f(h), \nu_i\rangle=\langle f(h), \sum_i \nu_i\rangle=f(h)(\nu).\] So, $T^1S^1=id$. By symmetry, $S^1T^1=id$.

\subsection{Case $n=3$}

Now, we construct a (linear) map $S^3: C_b(H^3, N_0(H, Y)^{**})\to C_b(G^3, N_0(G, X)^{**})$. 
For each $f\in C_b(H^3, N_0(H, Y)^{**})$, define $S^3(f)=f'$, where
\begin{align*}
f'(g_0, g_1, g_2)(\tau):=\sum_{i, j, l}\langle f(h_i, h_j, h_l),\tau|_{(g_0g_1)X_l\cap g_0X_j\cap X_i}\pi \rangle.
\end{align*}
Here, $\tau\in N_0(H, Y)^*$ and we still use (\ref{def of X_ijksl for thm 1.2 n=0,1,2}) for the definition of $X_i, X_j, X_k, X_l, X_s$.\\

\textbf{Step 1: check $S^3$ is well-defined. }\\

The proof is similar to the one when showing $S^1$ is well-defined.\\

\textbf{Step 2: check $S^3\partial^2=\partial^2S^2$.}\\

Take any $f\in C_b(H^3, N_0(H, Y)^{**})$, write $f'=S^2f$, we have
\begin{align*}
&\quad(S^3\partial^2f)(g_0, g_1, g_2)(\tau)\\
&=\sum_{i, j, l}\langle (\partial^2f)(h_i, h_j, h_l), \tau|_{(g_0g_1)X_l\cap g_0X_j\cap X_i}\pi\rangle\\
&=\sum_{i, j, l}\langle h_if(h_j, h_l)-f(h_ih_j, h_l)+f(h_i,h_jh_l)-f(h_i, h_j),\tau|_{(g_0g_1)X_l\cap g_0X_j\cap X_i}\pi \rangle\\
&=\sum_{i, j, l}\langle h_if(h_j, h_l),\tau|_{(g_0g_1)X_l\cap g_0X_j\cap X_i}\pi \rangle-\sum_{i, j, l}\langle f(h_ih_j, h_l),\tau|_{(g_0g_1)X_l\cap g_0X_j\cap X_i}\pi \rangle\\
&+\sum_{i, j, l}\langle f(h_i,h_jh_l),\tau|_{(g_0g_1)X_l\cap g_0X_j\cap X_i}\pi \rangle-\sum_{i, j, l}\langle f(h_i, h_j),\tau|_{(g_0g_1)X_l\cap g_0X_j\cap X_i}\pi \rangle\\
&=:\textcircled{1}-\textcircled{2}+\textcircled{3}-\textcircled{4}.
\end{align*}
Here, $\textcircled{1}, \textcircled{2}, \textcircled{3}$ and $\textcircled{4}$ denote the corresponding 
terms appeared above.

\begin{align*}
&\quad(\partial^2S^2f)(g_0, g_1, g_2)(\tau)\\
&=(g_0f'(g_1, g_2)-f'(g_0g_1, g_2)+f'(g_0, g_1g_2)-f'(g_0, g_1))(\tau)\\
&=\sum_{j, l}\langle f(h_j, h_l), (g_0^{-1}\tau)|_{g_1X_l\cap X_j} \pi\rangle-\sum_{k, l}\langle f(h_k, h_l),\tau|_{g_0g_1X_l\cap X_k} \pi\rangle\\
&+\sum_{i, s}\langle f(h_i, h_s), \tau|_{g_0X_s\cap X_i}\pi \rangle-\sum_{i, j}\langle f(h_i, h_j),\tau|_{g_0X_j\cap X_i} \pi\rangle\\
&=:\textcircled{5}-\textcircled{6}+\textcircled{7}-\textcircled{8}.
\end{align*}

Now, in order to prove $\textcircled{1}-\textcircled{2}+\textcircled{3}-\textcircled{4}=\textcircled{5}-\textcircled{6}+\textcircled{7}-\textcircled{8}$, it suffices to check the corresponding terms are equal, i.e. $\textcircled{1}=\textcircled{5}$, $\textcircled{2}=\textcircled{6}$, $\textcircled{3}=\textcircled{7}$ and $\textcircled{4}=\textcircled{8}$. More precisely, we want to prove the following equalities hold.

\begin{align}
\sum_{i, j, l}\langle h_if(h_j, h_l),\tau|_{(g_0g_1)X_l\cap g_0X_j\cap X_i}\pi \rangle&=\sum_{j, l}\langle f(h_j, h_l), (g_0^{-1}\tau)|_{g_1X_l\cap X_j} \pi\rangle\label{eq 1= eq 5 in thm 1.2 n=2},\\
\sum_{i, j, l}\langle f(h_ih_j, h_l),\tau|_{(g_0g_1)X_l\cap g_0X_j\cap X_i}\pi \rangle&=\sum_{k, l}\langle f(h_k, h_l),\tau|_{g_0g_1X_l\cap X_k} \pi\rangle\label{eq 2= eq 6 in thm 1.2 n=2},\\
\sum_{i, j, l}\langle f(h_i,h_jh_l),\tau|_{(g_0g_1)X_l\cap g_0X_j\cap X_i}\pi \rangle&=\sum_{i, s}\langle f(h_i, h_s), \tau|_{g_0X_s\cap X_i}\pi \rangle\label{eq 3= eq 7 in thm 1.2 n=2},\\
\sum_{i, j, l}\langle f(h_i, h_j),\tau|_{(g_0g_1)X_l\cap g_0X_j\cap X_i}\pi \rangle&=\sum_{i, j}\langle f(h_i, h_j),\tau|_{g_0X_j\cap X_i} \pi\rangle.\label{eq 4= eq 8 in thm 1.2 n=2}
\end{align}

To check (\ref{eq 1= eq 5 in thm 1.2 n=2}), it suffices to check for each $j, l$, 
\begin{align}\label{eq9}
\sum_i\langle h_if(h_j, h_l),\tau|_{(g_0g_1)X_l\cap g_0X_j\cap X_i}\pi \rangle=\langle f(h_j, h_l),(g_0^{-1}\tau)|_{g_1X_l\cap X_j}\pi \rangle.\end{align}
Observe that 
\begin{align}\label{middle eq for thm 1.2, n=2}
h_i^{-1}(\tau|_{(g_0g_1)X_l\cap g_0X_j\cap X_i}\pi)=(g_0^{-1}\tau)|_{g_1X_l\cap X_j\cap g_0^{-1}X_i}\pi.
\end{align}
Indeed, take any $\xi\in N_0(H, Y)$, and write $X_0:=(g_0g_1)X_l\cap g_0X_j\cap X_i$, we have 
$(h_i^{-1}(\tau|_{X_0}\pi))\xi=\tau(\pi(h_i\xi)|_{X_0})\overset{Lem. \ref{lem: module structure under restriction}}{=}\tau(g_0(\pi(\xi)|_{g_0^{-1}X_0}))=[(g_0^{-1}\tau)|_{g_0^{-1}X_0}\pi](\xi)$.

Therefore,
\begin{align*}
\mbox{LHS of (\ref{eq9})}
&=\sum_i\langle h_if(h_j, h_l),\tau|_{(g_0g_1)X_l\cap g_0X_j\cap X_i}\pi \rangle\\
&=\sum_i\langle f(h_j, h_l),h_i^{-1}(\tau|_{(g_0g_1)X_l\cap g_0X_j\cap X_i}\pi) \rangle\\
&\overset{(\ref{middle eq for thm 1.2, n=2})}{=}\sum_i\langle f(h_j, h_l),(g_0^{-1}\tau)|_{g_1X_l\cap X_j\cap g_0^{-1}X_i}\pi) \rangle\\
&=\langle f(h_j, h_l),(g_0^{-1}\tau)|_{\sqcup_i(g_1X_l\cap X_j\cap g_0^{-1}X_i)}\pi \rangle\\
&=\langle f(h_j, h_l),(g_0^{-1}\tau)|_{g_1X_l\cap X_j} \pi\rangle\\
&=\mbox{RHS of (\ref{eq9})}. 
\end{align*}

To check (\ref{eq 2= eq 6 in thm 1.2 n=2}),
observe $X_k=\sqcup_{(i, j)\in I_k}g_0X_j\cap X_i$, where $I_k=\{(i, j): h_k=h_ih_j\}$.
So,
\begin{align*}
\mbox{RHS of (\ref{eq 2= eq 6 in thm 1.2 n=2})}
&= \sum_{k,l}\sum_{(i, j)\in I_k} \langle f(h_ih_j, h_l), \tau|_{g_0g_1X_l\cap g_0X_j\cap X_i}\pi \rangle\\
&=\sum_l\sum_k\sum_{(i, j)\in I_k} \langle f(h_ih_j, h_l), \tau|_{g_0g_1X_l\cap g_0X_j\cap X_i}\pi \rangle\\
&=\sum_{l, i,j}\langle f(h_ih_j, h_l), \tau|_{g_0g_1X_l\cap g_0X_j\cap X_i}\pi \rangle\\
&=\mbox{LHS of (\ref{eq 2= eq 6 in thm 1.2 n=2})}.
\end{align*}
Here, the 2nd last equality holds as $g_0g_1X_l\cap g_0X_j\cap X_i\neq \emptyset$ only if $(i, j)\in I_k$ for some $k$.

To check (\ref{eq 3= eq 7 in thm 1.2 n=2}), use $X_s=\sqcup_{(j, l)\in I_s}g_1X_l\cap X_j$, where $I_s=\{(j, l): h_s=h_jh_l\}$ to deduce
\begin{align*}
\mbox{RHS of (\ref{eq 3= eq 7 in thm 1.2 n=2})}
&=\sum_{i, s}\langle f(h_i, h_s),\tau|_{g_0X_s\cap X_i}\pi \rangle \\
&=\sum_{i, s}\sum_{(j, l)\in I_s}\langle f(h_i, h_jh_l),\tau|_{g_0g_1X_l\cap g_0X_j\cap X_i}\pi \rangle\\
&= \sum_{k,l}\sum_{(i, j)\in I_k}\langle f(h_i, h_jh_l),\tau|_{g_0g_1X_l\cap g_0X_j\cap X_i}\pi \rangle\\
&=\sum_{l,i,j}\langle f(h_i, h_jh_l),\tau|_{g_0g_1X_l\cap g_0X_j\cap X_i}\pi \rangle\\
&=\mbox{LHS of (\ref{eq 3= eq 7 in thm 1.2 n=2})}.
\end{align*}
The 3rd last equality holds since we have a bijection between the index sets:

\[\{(i, s, j, l): h_s=h_jh_l, g_0g_1X_l\cap g_0X_j\cap X_i\neq \emptyset\}\]
and
\[\{(k, l, i, j): h_k=h_ih_j, g_0g_1X_l\cap g_0X_j\cap X_i\neq \emptyset\}.\]

And the 2nd last equality holds as $g_0g_1X_l\cap g_0X_j\cap X_i\neq \emptyset$ only if $(i, j)\in I_k$ for some $k$.

To check (\ref{eq 4= eq 8 in thm 1.2 n=2}), just observe $g_0X_j\cap X_i=\sqcup_l(g_0g_1X_l\cap g_0X_j\cap X_i).$\\

\textbf{Step 3: $S^2$ is a bijection.}\\

Define $T^2: C_b(G^2, N_0(G, X)^{**})\to C_b(H^2, N_0(H, Y)^{**})$ by setting $T^2(f')=f''$, where 
\[f''(h_0, h_1)(\nu)=\sum_{i, j}\langle f'(g_i, g_j),\nu|_{h_0Y_j\cap Y_i} L\rangle.\] Here $\nu\in N_0(H, Y)^*$, and 
\begin{align*}
Y_i&:=\{y\in Y; c'(h_0^{-1}, y)=g_i^{-1}\},\\
Y_j&:=\{y\in Y; c'(h_1^{-1}, y)=g_j^{-1}\}. 
\end{align*}

Now, let us check $T^2S^2=id$.

Take any $f\in C_b(H^2, N_0(H, Y)^{**})$, $\nu\in N_0(H, Y)^*$, let $f'=S^2f$ and $f''=T^2f'$, we aim to show $f''(h_0, h_1)(\nu)=f(h_0, h_1)(\nu)$.

A calculation shows 
\begin{align*}
f''(h_0, h_1)(\nu)&=\sum_{i, j}\sum_{s, t}\langle f(h_s, h_t), (\nu|_{h_0Y_j\cap Y_i}L)|_{g_iX_{g_j, h_t}\cap X_{g_i, h_s}}\pi \rangle\\
&=\sum_{i, j}\sum_{s, t}\langle f(h_s, h_t), \nu|_{h_0Y_j\cap Y_i\cap \phi(g_iX_{g_j, h_t}\cap X_{g_i, h_s})}\rangle.
\end{align*}
Here, $X_{g_i, h_s}=\{x\in X: c(g_i^{-1}, x)=h_s^{-1}\}$ and $X_{g_j, h_t}=\{x\in X: c(g_j^{-1}, x)=h_t^{-1}\}$.

One can check that $h_0Y_j\cap Y_i\cap \phi(g_iX_{g_j, h_t}\cap X_{g_i, h_s})=\emptyset$ unless $(h_s, h_t)=(h_0, h_1)$. If this condition holds, then $h_0Y_j\cap Y_i\cap \phi(g_iX_{g_j, h_1}\cap X_{g_i, h_0})=h_0Y_j\cap Y_i$.

Hence, 
\begin{align*}
f''(h_0, h_1)(\nu)=\sum_{i,j}\langle f(h_0, h_1), \nu|_{h_0Y_j\cap Y_i}\rangle=f(h_0, h_1)(\nu).
\end{align*}

\subsection{General case}

We'll define a map $S^n: C_b(H^n, N_0(H, Y)^{**})\to C_b(G^n, N_0(G, X)^{**})$ and check it is a bijective cochain map. Hence, it induces an isomorphism between the two bounded cohomology groups.

We introduce some notations which will be used in this subsection.

Let $g_0,\ldots, g_n$ be elements in $G$. Write 
\begin{align}\label{def for thm 1.2: c(g, X)}
\begin{split}
c(g_j^{-1}, X)&:=\{h_{t_j}^{-1}|~ t_j\},\\
X_{t_j}&:=\{x\in X: c(g_j^{-1}, x)=h_{t_j}^{-1}\},\\
c((g_ig_{i+1})^{-1}, X)&:=\{h_{s_i}^{-1}|~ s_i\},\\
X_{s_i}&:=\{x\in X: c((g_ig_{i+1})^{-1}, x)=h_{s_i}^{-1}\}.
\end{split}
\end{align}
Note that $X_{s_i}=\sqcup_{(t_i, t_{i+1})\in \Delta_i}X_{t_i}\cap g_iX_{t_{i+1}}$, where $\Delta_i:=\{(t_i, t_{i+1}): h_{s_i}=h_{t_i}h_{t_{i+1}}\}$.

We remind the reader once again that $X_{t_j}$,  $X_{s_i}$ and $\Delta_i$ are just simplified versions for the notations $X_{g_j, h_{t_j}}$, $X_{g_ig_{i+1}, h_{s_i}}$ and $\Delta_{s_i}$ respectively, which we will switch to if necessary.

Let $f\in C_b(H^n, N_0(H, Y)^{**})$, we define $S^n(f):=f'\in C_b(G^n, N_0(G, X)^{**})$, where 
\[f'(g_0, \dots, g_{n-1})(\tau):=\sum_{t_0,\dots, t_{n-1}}\langle f(h_{t_0}, \dots, h_{t_{n-1}}), \tau|_{[t_0,\ldots, t_{n-1}]}\circ\pi \rangle\]
for all $g_i\in G$ and $\tau\in N_0(G, X)^*$. 

Here, 
\begin{align}\label{def of t_0,...,t_n-1 in general case in thm 1.2}
[t_0, \ldots, t_{n-1}]:=X_{t_0}\cap g_0X_{t_1}\cap g_0g_1X_{t_2}\cap \dots\cap (g_0\cdots g_{n-2})X_{t_{n-1}}.
\end{align}
We split the proof into several steps.\\

\textbf{Step 1: $S^n$ is well-defined.}\\

Apply Lemma \ref{lem: norm estimate for state restriction} to $X=\sqcup_{t_0,\ldots, t_{n-1}}[t_0, \ldots, t_{n-1}]$.\\

\textbf{Step 2: $S^n$ is a cochain map, i.e. $S^{n+1}\partial^n=\partial^nS^n$.}\\

Fix $f\in C_b(H^n, N_0(H, Y)^{**})$, let $f'=S^nf$. We aim to show $S^{n+1}\partial^nf=\partial^nf'$. 

Take any $g_0,\ldots, g_n$ in $G$, and any $\tau\in N_0(G, X)^*$. We compute as follows.

First,
\begin{align}\label{eq for thm 1.2: RHS of cochain map}
\begin{split}
&\qquad\partial^nf'(g_0,\ldots, g_n)(\tau)\\
&=[g_0f'(g_1,\ldots, g_n)+\sum_{i=1}^n(-1)^if'(g_0, \ldots, g_{i-1}g_i, \ldots, g_n)+(-1)^{n+1}f'(g_0,\ldots, g_{n-1})](\tau)\\
&=\sum_{t_1,\ldots, t_n}\langle f(h_{t_1},\ldots, h_{t_n}), (g_0^{-1}\tau)|_{[t_1,\ldots, t_n]}\pi \rangle\\
&\quad+\sum_{i=1}^n(-1)^i\sum_{\substack{t_0,\ldots, t_{i-2}, \\s_{i-1}, t_{i+1},\ldots, t_n}}\langle f(h_{t_0},\ldots, h_{t_{i-2}}, h_{s_{i-1}}, h_{t_{i+1}},\ldots,h_{t_n}), \tau|_{[t_0,\ldots, t_{i-2}, s_{i-1}, t_{i+1},\ldots, t_n]}\pi \rangle\\
&\quad+(-1)^{n+1}\sum_{t_0,\ldots, t_{n-1}}\langle f(h_{t_0},\ldots, h_{t_{n-1}}), \tau|_{[t_0,\ldots, t_{n-1}]}\pi \rangle.   
\end{split}
\end{align}
Recall here, $c(g_{i-1}g_i, X)=\{h_{s_{i-1}}|~s_{i-1}\}$, $X_{s_{i-1}}=\{x: c((g_{i-1}g_i)^{-1}, x)=h_{s_{i-1}}^{-1}\}$ and
\begin{multline}\label{def of t_0...t_i-2,s_i-1,t_i+1,...,t_n}
[t_0,\ldots, t_{i-2}, s_{i-1}, t_{i+1},\ldots, t_n]=X_{t_0}\cap g_0X_{t_1}\cap \cdots\cap (g_0\cdots g_{i-2})X_{s_{i-1}}\\
\cap(g_0\cdots g_{i-2}g_{i-1}g_i)X_{t_{i+1}}\cap\cdots\cap (g_0\dots g_{n-1})X_{t_n}.
\end{multline}
Second, 
\begin{align}\label{eq for thm 1.2: LHS of cochain map}
\begin{split}
&\qquad(S^{n+1}\partial^nf)(g_0,\ldots, g_n)(\tau)\\
&=\sum_{t_0,\ldots, t_n}\langle\partial^n(f)(h_{t_0},\ldots, h_{t_n}), \tau|_{[t_0,\ldots, t_n]}\pi\rangle\\
&=\sum_{t_0,\ldots, t_n}\langle h_{t_0}f(h_{t_1},\ldots, h_{t_n}),  \tau|_{[t_0,\ldots, t_n]} \pi\rangle\\
&\quad+\sum_{t_0,\ldots, t_n}\langle \sum_{i=1}^n(-1)^i f(h_{t_0},\ldots, h_{t_{i-1}}h_{t_i},\ldots, h_{t_n}), \tau|_{[t_0,\ldots, t_n]}\pi\rangle\\
&\quad+\sum_{t_0,\ldots, t_n}\langle(-1)^{n+1}f(h_{t_0},\ldots, h_{t_{n-1}}), \tau|_{[t_0,\ldots, t_n]} \pi\rangle.
\end{split}
\end{align}

By comparing (\ref{eq for thm 1.2: LHS of cochain map}) with (\ref{eq for thm 1.2: RHS of cochain map}), it suffices to show that the corresponding terms are equal, i.e. we want to establish the following identities.

(a) \begin{align*}
\sum_{t_1,\ldots, t_n}\langle f(h_{t_1},\ldots, h_{t_n}), (g_0^{-1}\tau)|_{[t_1,\ldots, t_n]}\pi \rangle=\sum_{t_0,\ldots, t_n}\langle h_{t_0}f(h_{t_1},\ldots, h_{t_n}),  \tau|_{[t_0,\ldots, t_n]} \pi\rangle.
\end{align*}

(b) For each $1\leq i\leq n$,
\begin{multline*}
\sum_{\substack{t_0,\ldots, t_{i-2}, \\s_{i-1}, t_{i+1},\ldots, t_n}}\langle f(h_{t_0},\ldots, h_{t_{i-2}}, h_{s_{i-1}}, h_{t_{i+1}},\ldots,h_{t_n}), \tau|_{[t_0,\ldots, t_{i-2}, s_{i-1}, t_{i+1},\ldots, t_n]}\pi \rangle\\
=\sum_{t_0,\ldots, t_n}\langle  f(h_{t_0},\ldots, h_{t_{i-1}}h_{t_i},\ldots, h_{t_n}), \tau|_{[t_0,\ldots, t_n]}\pi\rangle.
\end{multline*}

(c) \begin{align*}
\sum_{t_0,\ldots, t_{n-1}}\langle f(h_{t_0},\ldots, h_{t_{n-1}}), \tau|_{[t_0,\ldots, t_{n-1}]}\pi \rangle=\sum_{t_0,\ldots, t_n}\langle (-1)^{n+1}f(h_{t_0},\ldots, h_{t_{n-1}}), \tau|_{[t_0,\ldots, t_n]} \pi\rangle.
\end{align*}

\begin{proof}[Proof of (a)]
By Lemma \ref{lem: composition of pi and L under restrictions}, $(g_0^{-1}\tau)|_{g_0^{-1}X_{t_0}\cap [t_1,\ldots, t_n]}\pi=h_{t_0}^{-1}(\tau|_{[t_0,\ldots, t_n]}\pi)$ holds. 

Indeed, evaluate both sides at any $\xi$ in $N_0(H, Y)$, this boils down to check $X_{t_0}\cap g_0[t_1,\ldots,t_n]=[t_0,\ldots,t_n]$, which is clear from the definition of $[t_0,\ldots,t_n].$

Using this identity, we deduce
\begin{align*}
&\quad \mbox{RHS of (a)}\\
&=\sum_{t_0,\ldots, t_n}\langle f(h_{t_1},\ldots, h_{t_n}),h_{t_0}^{-1}(\tau|_{[t_0,\ldots,t_n]}\pi) \rangle\\
&=\sum_{t_0,\ldots, t_n}\langle f(h_{t_1},\ldots, h_{t_n}), (g_0^{-1}\tau)|_{g_0^{-1}X_{t_0}\cap [t_1,\ldots, t_n]}\pi\rangle\\
&=\sum_{t_1,\ldots, t_n}\langle f(h_{t_1},\ldots, h_{t_n}), (g_0^{-1}\tau)|_{\sqcup_{t_0}g_0^{-1}X_{t_0}\cap [t_1,\ldots, t_n]}\pi\rangle\\
&=\sum_{t_1,\ldots, t_n}\langle f(h_{t_1},\ldots, h_{t_n}), (g_0^{-1}\tau)|_{[t_1,\ldots, t_n]}\pi\rangle\\
&=\mbox{LHS of (a)}.\qedhere
\end{align*}
\end{proof}

\begin{proof}[Proof of (b)]
From $X_{s_{i-1}}=\sqcup_{(t_{i-1}, t_i)\in \Delta_{i-1}}X_{t_{i-1}}\cap g_{i-1}X_{t_i}$,
we deduce that
\begin{align}\label{eq10}
[t_0,\ldots, t_{i-2}, s_{i-1}, t_{i+1},\ldots, t_n]=\sqcup_{(t_{i-1}, t_i)\in \Delta_{i-1}}[t_0,\ldots, t_n].
\end{align}

Indeed, just compare definition (\ref{def of t_0,...,t_n-1 in general case in thm 1.2}) and (\ref{def of t_0...t_i-2,s_i-1,t_i+1,...,t_n}).

Hence, 
\begin{align*}
&\quad\sum_{s_{i-1}}\langle f(h_{t_0},\ldots, h_{t_{i-2}}, h_{s_{i-1}}, h_{t_{i+1}},\ldots,h_{t_n}), \tau|_{[t_0,\ldots, t_{i-2}, s_{i-1}, t_{i+1},\ldots, t_n]}\pi \rangle\\
&\overset{(\ref{eq10})}{=}\sum_{s_{i-1}}\sum_{(t_{i-1}, t_i)\in \Delta_{i-1}}\langle f(h_{t_0},\ldots, h_{t_{i-2}}, h_{t_{i-1}}h_{t_i}, h_{t_{i+1}},\ldots,h_{t_n}), \tau|_{[t_0,\ldots,t_n]}\pi \rangle\\
&=\sum_{t_{i-1}, t_i}\langle f(h_{t_0},\ldots, h_{t_{i-2}}, h_{t_{i-1}}h_{t_i}, h_{t_{i+1}},\ldots,h_{t_n}), \tau|_{[t_0,\ldots,t_n]}\pi \rangle.
\end{align*}
The last equality holds since $[t_0, \ldots, t_n]\neq \emptyset$ only if $(t_{i-1}, t_i)\in \Delta_{i-1}$ for some $s_{i-1}$. (Recall $\Delta_{i-1}=\Delta_{s_{i-1}}$ for simplicity.)

 Therefore, (b) holds by taking sum over $t_0,\ldots, t_{i-2},t_{i+1},\ldots, t_n$ on both sides of the above equality. 
\end{proof}

\begin{proof}[Proof of (c)]
As $[t_0,\ldots, t_n]=[t_0,\ldots,t_{n-1}]\cap (g_0\cdots g_{n-1})X_{t_n}$, we get 
\begin{align*}
&\quad\mbox{RHS of (c)}\\
&=\sum_{t_0,\ldots, t_n}\langle f(h_{t_0},\ldots, h_{t_{n-1}}), \tau|_{[t_0,\ldots, t_{n-1}]\cap (g_0\cdots g_{n-1})X_{t_n}}\pi \rangle\\
&=\sum_{t_0,\ldots, t_{n-1}}\langle f(h_{t_0},\ldots, h_{t_{n-1}}), \tau|_{[t_0,\ldots, t_{n-1}]\cap \sqcup_{t_n}(g_0\cdots g_{n-1})X_{t_n}}\pi \rangle\\
&=\sum_{t_0,\ldots, t_{n-1}}\langle f(h_{t_0},\ldots, h_{t_{n-1}}), \tau|_{[t_0,\ldots, t_{n-1}]}\pi \rangle\\
&=\mbox{LHS of (c)}.\qedhere
\end{align*}
\end{proof}

\textbf{Step 3: $S^n$ is a bijection.}\\

To check $S^n$ is a bijection, consider the natural inverse map $T^n$ defined by symmetry as follows. 

For $f'\in C_b(G^n, N_0(G, X)^{**})$, define $f'':=T(f')$ as follows:
\begin{align*}
f''(h_0,\ldots, h_{n-1})(\nu):=\sum_{i_0,\ldots, i_{n-1}}\langle f'(g_{i_0},\ldots, g_{i_{n-1}}), \nu|_{[i_0,\ldots, i_{n-1}]} \circ L\rangle.
\end{align*}

Here, $\nu\in N_0(H, Y)^*$, $[i_0, \ldots, i_{n-1}]:=Y_{i_0}\cap h_0Y_{i_1}\cap\cdots\cap (h_0\cdots h_{n-2})Y_{i_{n-1}}$ and $Y_{i_j}:=\{y: c'(h_j^{-1}, y)=g_{i_j}^{-1}\}$.

Let $f\in C_b(H^n, N_0(H, Y)^{**})$, write $f'=S^nf$ and $f''=T^nf'$.
We aim to show $T^nS^n=id$, i.e. $f''=f$.
Take any $h_0,\ldots, h_{n-1}$ in $H$ and any $\nu$ in $N_0(H, Y)^*$, then
\begin{align}\label{eq for thm 1.2: main computation on bijection}
\begin{split}
&\quad f''(h_0,\ldots, h_{n-1})(\nu)\\
&=\sum_{i_0,\ldots, i_{n-1}}\langle f'(g_{i_0},\ldots, g_{i_{n-1}}), \nu|_{[i_0,\ldots, i_{n-1}]} \circ L\rangle\\
&=\sum_{i_0,\ldots, i_{n-1}}\sum_{t_0,\ldots, t_{n-1}}\langle f(h_{t_0}, \ldots, h_{t_{n-1}}),(\nu|_{[i_0,\ldots, i_{n-1}]} \circ L)|_{[t_0,\ldots, t_{n-1}]}\pi \rangle.
\end{split}
\end{align}
Here, $[t_0,\ldots, t_{n-1}]:=X_{g_{i_0}, h_{t_0}}\cap g_{i_0}X_{g_{i_1}, h_{t_1}}\cap\cdots\cap (g_{i_0}\cdots g_{i_{n-2}})X_{g_{i_{n-1}}, h_{t_{n-1}}}$ and $X_{g_{i_j}, h_{t_j}}:=\{x: c(g_{i_j}^{-1}, x)=h_{t_j}^{-1}\}$.

To continue the proof, we notice the following facts.

(Fact 5) $(\nu|_{[i_0,\ldots, i_{n-1}]} \circ L)|_{[t_0,\ldots, t_{n-1}]}\pi=\nu|_{[i_0,\ldots, i_{n-1}]\cap \phi([t_0,\ldots, t_{n-1}])}$.

To prove this fact, take any $\xi\in N_0(H, Y)$, $h\in H$ and $y\in Y$. Let $\xi'=\pi(\xi)$ and $\eta=\xi'|_{[t_0,\ldots,t_{n-1}]}$. We aim to prove $((\nu|_{[i_0,\ldots, i_{n-1}]} \circ L)|_{[t_0,\ldots, t_{n-1}]}\pi)(\xi)=(\nu|_{[i_0,\ldots, i_{n-1}]\cap \phi([t_0,\ldots, t_{n-1}])})(\xi)$; equivalently, $\nu((L\circ \eta)|_{[i_0,\ldots, i_{n-1}]})=\nu(\xi|_{{[i_0,\ldots, i_{n-1}]\cap \phi([t_0,\ldots, t_{n-1}])}})$. This is clear by Lemma \ref{lem: composition of pi and L under restrictions}.

(Fact 6) $[i_0,\ldots, i_{n-1}]\cap \phi([t_0,\ldots, t_{n-1}])=\emptyset$ unless $h_{t_j}=h_j$ for all $0\leq j\leq n-1$. When these conditions hold, the intersection equals $[i_0,\ldots, i_{n-1}]$.

This fact can be directly checked using cocycle identity and Lemma \ref{lemma: xin's lemma}.\\

Now we can continue the computation in (\ref{eq for thm 1.2: main computation on bijection}) as follows.
\begin{align*}
\begin{split}
&\quad f''(h_0,\ldots, h_{n-1})(\nu)\\
&\overset{Fact~5}{=}\sum_{i_0,\ldots, i_{n-1}}\sum_{t_0,\ldots, t_{n-1}}\langle f(h_{t_0}, \ldots, h_{t_{n-1}}),\nu|_{[i_0,\ldots, i_{n-1}]\cap \phi([t_0,\ldots, t_{n-1}])} \rangle\\
&\overset{Fact~6}{=}\sum_{i_0,\ldots, i_{n-1}}\langle f(h_0, \ldots, h_{n-1}),\nu|_{[i_0,\ldots, i_{n-1}]} \rangle\\
&=\langle f(h_0,\ldots,h_{n-1}), \nu \rangle.
\end{split}
\end{align*}
Therefore, $f''=f$ and the proof is finished.

\section{Concluding remarks}\label{section: remarks}

We end this paper with several remarks addressing various issues related to our main theorems.

(1) It is routine to check that the isomorphism $S$ constructed in the proof preserves the Johnson classes \cite[Definition 20]{bnnw2} and the fundamental classes \cite[Definition 3]{bnnw} for the two actions, which were introduced to study amenability of group actions. Since the triviality of the Johnson class \cite[Theorem 1]{bnnw2} and the nontriviality of the fundamental class \cite[Theorem 9]{bnnw} are used to characterize the amenability of an action, we see that for two topologically free actions, topologically amenability is an invariant property under continuous orbit equivalence.

(2) Under the same assumptions as in our theorems, it is natural to ask whether $H^{uf}_n(G\curvearrowright X)\cong H^{uf}_n(H\curvearrowright Y)$ for $n\geq 1$ holds. For this question, our method does not work. Indeed, we use crucially the decomposition of elements in $N_0(G, X)$ as a finite sum of elements in $N_0(G, X)$ with respect to a finite partition of $X$ into clopen subsets. The summands do not belong to $W_0(G, X)$ anymore if we decompose an element in $W_0(G, X)$. This may be interpreted as saying $W_0(G, X)$ is not res-invariant (w.r.t. $X$), following \cite[Definition 4.1]{li} in spirit or reflecting the fact $W_0(G, X)$ is not a $(G, X)$-module in the sense of \cite{monod}.

In fact, the following suggests the above isomorphism may fail in general. First, by \cite[Corollary 5]{bnnw}, we know that for a topologically amenable action $G\curvearrowright X$, $H^{uf}_n(G\curvearrowright X)\cong H_n(G, \mathbb{R})\oplus H_n(G, N_0(G, X)^*)$ holds. Then, we consider two continuous actions which are both topologically amenable and topologically free, e.g. left translation actions on the Stone-\u{C}ech compactifications of free groups with different rank, say $F_2$ and $F_3$. As $F_2$ is quasi-isometric to $F_3$, we know they are also bilipschitz equivalent by \cite{whyte}; equivalently (see \cite[Corollary 2.21]{li}), $F_2\curvearrowright \beta F_2\overset{coe}{\sim} F_3\curvearrowright \beta F_3$. On the one hand, $H_1(F_k,\mathbb{R})\cong \mathbb{R}^k$ for all $k\geq 1$ implies $H_1(F_2,\mathbb{R})\not\cong H_1(F_3,\mathbb{R})$, but on the other hand, $H_1(F_2, N_0(F_2, \beta F_2)^*)\cong H_1(F_3, N_0(F_3, \beta F_3)^*)$ by our main theorems. This suggests that for the above example, one expects these two actions have non-isomorphic $H_1^{uf}$.

%Since $H_0(G, \mathbb{R})\cong \R$, $H^{uf}_0(G\curvearrowright X)\cong H^{uf}_0(H\curvearrowright Y)$ is reasonable in view of \cite[Corollary 10]{bnnw}.
Nevertheless, under certain assumptions, we can still have some positive result.

\begin{cor}
Let $G\curvearrowright X$ and $H\curvearrowright Y$ be  topologically free actions which are COE. If $G\curvearrowright X$ is topologically amenable and both $G$ and $H$ are  finitely generated torsion free nilpotent groups with finite cohomological dimension, then $H_i^{uf}(G\curvearrowright X)\cong H_i^{uf}(H\curvearrowright Y)$ for all $i\geq 0$.
\end{cor}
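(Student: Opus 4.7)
The plan is to reduce the corollary to an isomorphism of real group homology groups and then exploit rigidity of nilpotent groups under measure equivalence.

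First, Remark~(1) tells us that topological amenability is preserved under COE of topologically free actions, so $H\curvearrowright Y$ is also topologically amenable. Invoking \cite[Corollary 5]{bnnw} for both actions yields
\[
H_i^{uf}(G\curvearrowright X)\cong H_i(G,\mathbb{R})\oplus H_i(G, N_0(G,X)^*),
\]
and the analogous formula for $H\curvearrowright Y$. Theorem~\ref{thm on uf homology} identifies the second summands on the two sides, so the task reduces to proving $H_i(G,\mathbb{R})\cong H_i(H,\mathbb{R})$ for every $i\geq 0$.

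For this remaining step, the strategy is to use amenability of $G$ (immediate from nilpotence) to obtain a $G$-invariant Borel probability measure $\mu$ on $X$; topological freeness guarantees that the resulting p.m.p.\ $G$-action is essentially free. Transporting the orbit structure through $\phi$, and using amenability of $H$ to produce an $H$-invariant probability measure on $Y$ in the same orbit equivalence class, one obtains an orbit equivalence of free p.m.p.\ actions of amenable groups, and hence a measure equivalence coupling between $G$ and $H$ of coupling index~$1$. Results of Shalom (with refinements by Sauer in the nilpotent setting) then assert that, under ME of coupling index~$1$, the real Betti numbers $\dim_{\mathbb{R}} H_i(\cdot,\mathbb{R})$ of finitely generated amenable groups are preserved. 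The finite cohomological dimension hypothesis keeps all these dimensions finite, and equality of dimensions for real vector spaces delivers $H_i(G,\mathbb{R})\cong H_i(H,\mathbb{R})$ as required.

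The principal obstacle lies in the second paragraph: one must justify carefully that COE of topologically free actions of amenable groups produces a genuine index-one ME-coupling (existence of the invariant probability measure and controlling how $\phi$ transports it), and then invoke the correct form of Shalom/Sauer's ME-invariance of real Betti numbers for amenable (in particular nilpotent) groups. By contrast, the reduction in the first paragraph is purely formal from the paper's main theorem and \cite[Cor.~5]{bnnw}.
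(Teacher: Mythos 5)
Your first paragraph is exactly the paper's reduction: deduce topological amenability of $H\curvearrowright Y$ from Remark~(1), apply \cite[Corollary~5]{bnnw} to split off the $H_i(\,\cdot\,,\mathbb{R})$ summand, and use Theorem~\ref{thm on uf homology} to identify the $N_0$-coefficient summands. Up to that point you are aligned with the paper, and the remaining task is correctly identified as showing $H_i(G,\mathbb{R})\cong H_i(H,\mathbb{R})$.

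The second paragraph, however, does not work. There are several problems, and the last one is fatal. First, topological freeness of $G\curvearrowright X$ does not guarantee that a $G$-invariant Borel probability measure on $X$ yields an essentially free p.m.p.\ action: the dense $G_\delta$ set of free points can well be null (e.g.\ $\mathbb{Z}$ acting by translation on $\mathbb{Z}\cup\{\infty\}$ with $\infty$ fixed; the only invariant probability measure is $\delta_\infty$). Second, the pushforward $\phi_*\mu$ is only quasi-invariant under $H$; producing an $H$-invariant probability measure ``in the same orbit equivalence class'' and controlling the coupling index is not automatic and is not what COE hands you. Third, and most seriously, real Betti numbers of amenable groups are \emph{not} measure equivalence invariants, not even with coupling index~$1$. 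By Ornstein--Weiss, all infinite countable amenable groups are pairwise orbit equivalent (hence ME with index~$1$), yet $H_1(\mathbb{Z},\mathbb{R})\cong\mathbb{R}$ while $H_1(\mathbb{Z}^2,\mathbb{R})\cong\mathbb{R}^2$. So no form of Shalom/Sauer ME-rigidity can close this gap; the relevant rigidity result is quasi-isometric, not measure-theoretic.

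The paper instead observes that COE of topologically free actions on compact spaces directly produces a quasi-isometry between $G$ and $H$: fix $x\in X$ and check, via Lemma~\ref{lemma: xin's lemma} and continuity/compactness, that $g\mapsto c(g,x)$ is a quasi-isometry. Then Sauer's theorem \cite[Theorem~1.5]{sauer} gives $H^i(G,\mathbb{R})\cong H^i(H,\mathbb{R})$ for quasi-isometric nilpotent groups, and \cite[Theorem~1.2]{sauer} (or \cite[Corollary~4.42]{li}) gives $cd(G)=cd(H)$. Finally, finitely generated torsion-free nilpotent groups of finite cohomological dimension are orientable Poincar\'e duality groups, so $H^i(\,\cdot\,,\mathbb{R})\cong H_{n-i}(\,\cdot\,,\mathbb{R})$ with $n=cd$, which transports the cohomological isomorphism to the required homological one. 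Your proposal would need to replace the ME argument with this QI argument and add the Poincar\'e duality step to pass from cohomology to homology.
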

\begin{proof}
First, observe that $H\curvearrowright Y$ is also topologically amenable. This is clear by previous Remark (1). One can also check this using \cite[Definition 7]{bnnw} and the map $\pi$ defined in Lemma \ref{lemma: key lemma}. 

Then by \cite[Corollary 5]{bnnw}, for topologically amenable actions, we have $H^{uf}_i(G\curvearrowright X)\cong H_i(G, \mathbb{R})\oplus H_i(G, N_0(G, X)^*)$. The same holds for the action $H\curvearrowright Y$.

By Theorem \ref{thm on uf homology}, it suffices to show that $H_i(G, \mathbb{R})\cong H_i(H, \mathbb{R})$.

Observe that COE between the above two actions implies $G$ and $H$ are quasi-isometric. Indeed, just fix any $x\in X$, it is easy to check $g\in G\to H\ni c(g, x)$ is a quasi-isometry using Lemma \ref{lemma: xin's lemma}.

Now, as $G$ and $H$ are quasi-isometric nilpotent groups, we deduce $H^i(G, \mathbb{R})\cong H^i(H, \mathbb{R})$ by \cite[Theorem 1.5]{sauer}. Moreover, $cd(G)=cd(H)$ by \cite[Theorem 1.2]{sauer} or \cite[Corollary 4.42]{li}. The proof is finished by noticing that for finitely generated torsion free nilpotent groups, they are  
orientable Poincar\'e duality groups, which implies that $H^i(G, \mathbb{R})\cong H_{n-i}(G, \mathbb{R})$, where $n=cd(G)$. And the same holds for $H$. (For the above assertion and the definition of orientable Poincar\'e duality groups, see Section 10, Chapter VIII in \cite{brown}, in particular, \cite[Example 1, p. 222]{brown}.)
\end{proof}

(3) (Co)homology groups associated to many coefficient modules are proved to be invariants under COE for two topologically free actions in \cite[Theorem 3.1, 3.5]{li}, but as far as we can see, the (co)homology groups considered in our paper are not covered by these theorems. It seems plausible one may also prove our theorems using the method in \cite{li}, i.e. try to interpret the (co)homologies for $G$ as (co)homologies for the transformation groupoid, but one may need to extend unitary representations of \'etale locally compact groupoid (\cite[\S 3.2]{li}, \cite{renault}) to linear isometric representations on Banach spaces. In fact, for two COE topologically free actions $G\curvearrowright X$ and $H\curvearrowright Y$, it may be possible to show there is a one to one correspondence between $(G, X)$-modules of type $M$ in the sense of \cite{monod}, say $E$, and $(H, Y)$-modules of type $M $, say $F$, and under this correspondence, $H_b^*(G, E^*)\cong H_b^*(H, F^*)$. Theorem \ref{thm on b cohomology} may be thought of as an evidence for this.\\

\textbf{Acknowledgement:} We thank Prof. Piotr Nowak for helpful discussion related to this paper. We are also very grateful to the anonymous referee. He/She provided us with many valuable suggestions which greatly improved the readability of the paper.

\begin{bibdiv}
\begin{biblist}

\bib{AR}{book}{
   author={Anantharaman-Delaroche, C.},
   author={Renault, J.},
   title={Amenable groupoids},
   series={Monographies de L'Enseignement Math\'ematique [Monographs of
   L'Enseignement Math\'ematique]},
   volume={36},
   note={With a foreword by Georges Skandalis and Appendix B by E. Germain},
   publisher={L'Enseignement Math\'ematique, Geneva},
   date={2000},
  }

\bib{bw}{article}{
   author={Block, J.},
   author={Weinberger, S.},
   title={Aperiodic tilings, positive scalar curvature and amenability of
   spaces},
   journal={J. Amer. Math. Soc.},
   volume={5},
   date={1992},
   number={4},
   pages={907--918},
   }

\bib{bnnw2}{article}{
author={Brodzki, J.},
author={Niblo, G.},
author={Nowak, P.},
author={Wright, N.},
title={Amenable actions, invariant means and bounded cohomology},
journal={Journal of Topology and Analysis},
volume={4},
date={2012},
number={3},
pages={321--334},
 }

  \bib{bnnw}{article}{
   author={Brodzki, J.},
   author={Niblo, G.},
   author={Nowak, P.},
   author={Wright, N.},
   title={A homological characterization of topological amenability},
   journal={Algebr. Geom. Topol.},
   volume={12},
   date={2012},
   number={3},
   pages={1767--1780},}

\bib{bnw}{article}{
   author={Brodzki, J.},
   author={Niblo, G.},
   author={Wright, N.},
   title={Pairings, duality, amenability and bounded cohomology},
   journal={J. Eur. Math. Soc.},
   volume={14},
   date={2012},
   number={5},
   pages={1513--1518},}

\bib{brown}{book}{
   author={Brown, K.},
   title={Cohomology of groups},
   series={Graduate Texts in Mathematics},
   volume={87},
   publisher={Springer-Verlag, New York-Berlin},
   date={1982},
   pages={x+306},}

\bib{cohen}{article}{
author={Cohen, D.},
title={Continuous cocycle superrigidity for the full shift over a finitely generated torsion group
},
status={IMRN, in press},
}

\bib{cj1}{article}{
author={Chung, N.-P.},
author={Jiang, Y.},
title={Continuous cocycle superrigidity for shifts and groups with one end},
journal={Math. Ann.},
year={2017},
pages={1109--1132},
volume={368},
number={3--4},
}

\bib{cj2}{article}{
author={Chung, N.-P.},
author={Jiang, Y.},
title={Divergence, Undistortion and H\"{o}lder Continuous Cocycle Superrigidity for Full Shifts},
status={arXiv: 1709.10218},
}

\bib{gk}{article}{
   author={Guentner, E.},
   author={Kaminker, J.},
   title={Exactness and the Novikov conjecture},
   journal={Topology},
   volume={41},
   date={2002},
   number={2},
   pages={411--418},}   
   
\bib{hr}{article}{
   author={Higson, N.},
   author={Roe, J.},
   title={Amenable group actions and the Novikov conjecture},
   journal={J. Reine Angew. Math.},
   volume={519},
   date={2000},
   pages={143--153},}

\bib{j}{article}{
author={Jiang, Y.},
title={ Continuous cocycle superrigidity for coinduced actions and relative ends},
journal={Proc. Amer. Math. Soc.},
   volume={147},
   date={2019},
   number={1},
   pages={315--326},
}

\bib{johnson}{book}{
   author={Johnson, B.},
   title={Cohomology in Banach algebras},
   note={Memoirs of the American Mathematical Society, No. 127},
   publisher={American Mathematical Society, Providence, R.I.},
   date={1972},
   pages={iii+96},}

\bib{li_0}{article}{
title={Continuous orbit equivalence rigidity},
author={Li, X.},
journal={Ergodic Theory Dynam. Systems},
   volume={38},
   date={2018},
   number={4},
   pages={1543--1563},
}

\bib{li}{article}{
author={Li, X.},
title={Dynamical characterizations of quasi-isometry, and applications to cohomology},
journal={Algebr. Geom. Topol.},
   volume={18},
   date={2018},
   number={6},
   pages={3477--3535},
}

\bib{monod_book}{book}{
   author={Monod, N.},
   title={Continuous bounded cohomology of locally compact groups},
   series={Lecture Notes in Mathematics},
   volume={1758},
   publisher={Springer-Verlag, Berlin},
   date={2001},
   pages={x+214},}

\bib{monod}{article}{
author={Monod, N.},
title={A note on topological amenability},
journal={IMRN},
year={2011},
number={17},
pages={3872--3884},
}

\bib{oz}{article}{
   author={Ozawa, N.},
   title={Amenable actions and exactness for discrete groups},
   journal={C. R. Acad. Sci. Paris S\'er. I Math.},
   volume={330},
   date={2000},
   number={8},
   pages={691--695},}

\bib{renault}{book}{
   author={Renault, J.},
   title={A groupoid approach to $C^{\ast} $-algebras},
   series={Lecture Notes in Mathematics},
   volume={793},
   publisher={Springer, Berlin},
   date={1980},
   pages={ii+160},}

\bib{sauer}{article}{
   author={Sauer, R.},
   title={Homological invariants and quasi-isometry},
   journal={Geom. Funct. Anal.},
   volume={16},
   date={2006},
   number={2},
   pages={476--515},}

\bib{whyte}{article}{
   author={Whyte, K.},
   title={Amenability, bi-Lipschitz equivalence, and the von Neumann
   conjecture},
   journal={Duke Math. J.},
   volume={99},
   date={1999},
   number={1},
   pages={93--112},
   issn={0012-7094},
   }

\end{biblist}
\end{bibdiv}
\end{document}